\theoremstyle{plain}
\newtheorem{thm}{Theorem}[section]
\newtheorem{thmIntr}{Theorem}
\newaliascnt{corIntr}{thmIntr}
\newtheorem{corIntr}[corIntr]{Corollary}
\newaliascnt{lem}{thm}
\newtheorem{lem}[lem]{Lemma}
\newaliascnt{cor}{thm}
\newtheorem{cor}[cor]{Corollary}
\newaliascnt{prop}{thm}
\newtheorem{prop}[prop]{Proposition}
\theoremstyle{definition}
\newaliascnt{rem}{thm}
\newtheorem{rem}[rem]{Remark}
\newaliascnt{defn}{thm}
\newaliascnt{ex}{thm}
\numberwithin{equation}{section}
\def\bK{\ensuremath{\mathbb{K}}}
\def\bP{\ensuremath{\mathbb{P}}}
\def\bQ{\ensuremath{\mathbb{Q}}}
\def\bR{\ensuremath{\mathbb{R}}}
\def\bZ{\ensuremath{\mathbb{Z}}}
\def\bC{\ensuremath{\mathbb{C}}}
\def\cC{\ensuremath{\mathcal{C}}}
\def\cG{\ensuremath{\mathcal{G}}}
\def\cH{\ensuremath{\mathcal{H}}}
\def\cI{\ensuremath{\mathcal{I}}}
\def\cP{\ensuremath{\mathcal{P}}}
\def\Db{\mathop{\mathrm{D}^{\mathrm{b}}}\nolimits}
\DeclareMathOperator{\Pic}{Pic}
\DeclareMathOperator{\Hom}{Hom}
\DeclareMathOperator{\NS}{NS}
\DeclareMathOperator{\ch}{ch}
\DeclareMathOperator{\Coh}{Coh}
\DeclareMathOperator{\odisc}{\overline{\Delta}}
\DeclareMathOperator{\Char}{char}
\definecolor{applegreen}{rgb}{0.55, 0.71, 0.0}
\newcommand{\set}[1]{\left\{#1\right\}}
\title[The basepoint-freeness threshold of a very general abelian surface]{The basepoint-freeness threshold of a very general abelian surface}
\author[A.~Rojas]{Andr\'es Rojas}
\address{Departament de Matem\`atiques i Inform\`atica, Universitat de Barcelona, Gran Via de les Corts Catalanes 585, 08007 Barcelona, Spain}
\curraddr{Mathematisches Institut, Universität Bonn, Endenicher
Allee 60, 53115 Bonn, Germany}
\email{arojas@math.uni-bonn.de} 
\begin{document}

\begin{abstract}
For abelian surfaces of Picard rank 1, we perform explicit computations of the cohomological rank functions of the ideal sheaf of one point, and in particular of the \emph{basepoint-freeness threshold}. Our main tool is the relation between cohomological rank functions and Bridgeland stability.
In virtue of recent results of Caucci and Ito, these computations provide new information on the syzygies of polarized abelian surfaces.
\end{abstract}

\keywords{Basepoint-freeness threshold, Bridgeland stability conditions, syzygy, abelian surface}
\subjclass[2020]{
14F08, 14C20, 14K05}

\thanks{The author was partially supported by the Spanish MINECO grants MDM-2014-0445, RYC-2015-19175, and PID2019-104047GB-I00.}

\maketitle

\setcounter{tocdepth}{1}

\section{Introduction}

Throughout this note we work over an algebraically closed field $\bK$.

Motivated by the \emph{continuous rank functions} of Barja, Pardini and Stoppino (\cite{BPS:LinearSystems}), in their paper \cite{JP} Jiang and Pareschi introduced the \emph{cohomological rank functions} $h^i_{F,l}$ associated to a coherent sheaf (or more generally, a bounded complex of coherent sheaves) $F$ on a polarized abelian variety $(A,l)$.
For $x\in\bQ$, $h^i_{F,l}(x)$ makes sense of the $i$-th (hyper)cohomological rank of $F$ twisted with a (general) representative of the fractional polarization $xl$.

One of the main applications of these functions corresponds to the study of syzygies of abelian varieties. Jiang and Pareschi already observed in \cite[Section 8]{JP} that the functions of the ideal sheaf $\cI_q$ of a point $q\in A$, and more concretely the \emph{basepoint-freeness threshold}
\[
\epsilon_1(l)=\inf\set{x\in\bQ\mid h^1_{\cI_q,l}(x)=0},
\]
encodes interesting positivity properties of the polarization $l$:

\begin{enumerate}[{\rm (1)}]
\item\label{enum:intro1} $\epsilon_1(l)\leq1$, with equality if and only if any line bundle representing $l$ has base points.

\item\label{enum:intro2} \cite[Corollary~E]{JP} If $\epsilon_1(l)<\frac{1}{2}$, then any line bundle representing $l$ is projectively normal.
\end{enumerate}

Shortly after, Caucci generalized \eqref{enum:intro2} to higher syzygies, proving that every line bundle representing $l$ satisfies the property $(N_p)$ as long as $\epsilon_1(l)<\frac{1}{p+2}$ (\cite[Theorem 1.1]{Caucci}).
The reader is referred to \cite[Chapter 1.8.D]{laz} for a definition of the property $(N_p)$. As a consequence, he obtained a proof of Lazarsfeld's conjecture (originally proved in $\Char(\bK)=0$ by Pareschi \cite{Pareschi}) in arbitrary characteristic: if $L$ is an ample line bundle on an abelian variety, then $L^m$ satisfies $(N_p)$ for every $m\geq p+3$.

Caucci's result has received considerable attention as an effective tool to understand the syzygies of abelian varieties endowed with a primitive polarization (i.e. a polarization which is not a multiple of another one), by means of upper bounds for the basepoint-freeness threshold (see \cite{Jiang:Syzygies,Ito1,Ito2}). Furthermore, for $p\geq1$ the hypothesis $\epsilon_1(l)<\frac{1}{p+2}$ ensuring $(N_p)$ has recently been slightly weakened by Ito (\cite[Theorem 1.5]{Ito3}).

In the present note we give explicit expressions for the function $h^0_{\cI_q,l}$, which is enough for determining $h^1_{\cI_q,l}$ and hence $\epsilon_1(l)$. We do this for a certain class of polarized abelian surfaces which includes those with Picard rank 1. More precisely, our main result is:

\begin{thmIntr}\label{main}
Let $(S,l)$ be a $(1,d)$-polarized abelian surface such that $D\cdot l$ is a multiple of $l^2$ for every divisor class $D$, and let $q\in S$ be a (closed) point.
\begin{enumerate}[{\rm (1)}]
\item\label{main1} If $d$ is a perfect square, then the cohomological rank function $h^0_{\cI_q,l}$ reads
\begin{equation}\label{trivialfunction}
    h^0_{\cI_q,l}(x)=\left\{
    \begin{array}{c l}
     0 & x \leq \frac{\sqrt{d}}{d}\\
     dx^2-1 & x \geq \frac{\sqrt{d}}{d}\\
    \end{array}
    \right.
\end{equation}
In particular, $\epsilon_1(l)=\frac{\sqrt{d}}{d}$.

\vspace{1mm}

\item\label{main2} If $d$ is not a perfect square, then the cohomological rank function $h^0_{\cI_q,l}$ is either \eqref{trivialfunction} or
\[
    h^0_{\cI_q,l}(x)=\left\{
    \begin{array}{c l}
    0 & x \leq \frac{2\tilde{y}}{\tilde{x}+1}\\
    \frac{d(\tilde{x}+1)}{2}x^2-2d\tilde{y}\cdot x+\frac{\tilde{x}-1}{2} & \frac{2\tilde{y}}{\tilde{x}+1} \leq x \leq \frac{2\tilde{y}}{\tilde{x}-1}\\
    dx^2-1 & x \geq \frac{2\tilde{y}}{\tilde{x}-1}\\
    \end{array}
    \right.
\]
where $(\tilde{x},\tilde{y})$ is a nontrivial positive solution to Pell's equation $x^2-4d\cdot y^2=1$. In particular, if $(x_0,y_0)$ is the minimal positive solution to this equation, then $\epsilon_1(l)\leq\frac{2y_0}{x_0-1}$.

\vspace{1mm}

\item\label{main3} Under the hypothesis of \eqref{main2}, assume also that $\Char(\bK)$ divides neither $x_0^2$ nor $x_0^2-1$. Then the expression for $h^0_{\cI_q,l}$ is the one corresponding to either the minimal solution $(x_0,y_0)$ or to the second smallest positive solution $(x_1,y_1)$. In particular, $\epsilon_1(l)\in\{\frac{2y_0}{x_0-1},\frac{2y_1}{x_1-1}\}$.

\end{enumerate}
\end{thmIntr}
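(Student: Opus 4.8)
Our approach rests on the relation between cohomological rank functions and Bridgeland stability highlighted in the introduction. Write $H=l$, so that $H^2=2d$, and recall $\ch(\cI_q)=(1,0,-1)$. For $x\in\bQ_{>0}$ one has $\chi(\cI_q\otimes L_x)=\chi(L_x)-\chi(\cO_q)=dx^2-1$ for $L_x$ a representative of $xl$, and $h^2_{\cI_q,l}(x)=0$ (since $H^2(\cI_q\otimes L)\cong H^2(L)=0$ for $L$ a general representative); hence
\[
h^0_{\cI_q,l}(x)=h^1_{\cI_q,l}(x)+dx^2-1\qquad(x>0),
\]
so it suffices to determine $h^0_{\cI_q,l}$, equivalently to decide for which $x$ the twist $\cI_q\otimes L_x$ satisfies $\mathrm{IT}_0$. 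The plan is to read this off from the wall-and-chamber structure of the Mukai vector $v(\cI_q)=(1,0,-1)$ for the family of stability conditions $\sigma_{\alpha,\beta}$ on $S$: using that $h^i_{\cI_q,l}$ extends continuously to $\bR$, on each chamber of the $x$-line the value $h^0_{\cI_q,l}(x)$ is the Euler characteristic of the $\sigma$-semistable Jordan--H\"older piece of $\cI_q\otimes L_x$ that carries the sections --- a quadratic polynomial in $x$ --- and the transition points are the walls of $v(\cI_q)$.

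First I would classify these walls. The hypothesis that $D\cdot l$ is a multiple of $l^2$ for every divisor class is exactly what forces the only divisorial quantity entering $Z_{\alpha,\beta}$, namely $D\cdot l\in 2d\bZ$, to be an integer multiple of $l^2$, so that the wall analysis becomes a one-variable Diophantine problem. Since $v(\cI_q)^2=2$, a destabilizing decomposition on a wall must have exactly two Jordan--H\"older factors, with Mukai vectors $w$ and $v(\cI_q)-w$ that are isotropic and satisfy $\langle w,\,v(\cI_q)-w\rangle=1$; solving these constraints yields $w=(\tfrac{x'+1}{2},\,y'l,\,\tfrac{x'-1}{2})$ with $(x',y')$ a solution of Pell's equation $x'^2-4dy'^2=1$ (so $x'$ is automatically odd), the complementary factor being $-(\tfrac{x'-1}{2},\,y'l,\,\tfrac{x'+1}{2})$, i.e. $E[1]$ for a stable sheaf $E$. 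The corresponding wall is the semicircle $\alpha^2+(\beta-\tfrac{x'}{2dy'})^2=\tfrac{1}{4d^2y'^2}$, and such walls are realized by letting the factors be Fourier--Mukai transforms of line bundles along suitable isogenies, so that $\cI_q$ fits in a triangle $E\to A\to\cI_q$. If $d$ is a perfect square, Pell's equation has only the trivial solution $(1,0)$, so there are no walls, $\cI_q\otimes L_x$ is $\mathrm{IT}_0$ exactly when $dx^2-1\ge0$, and \eqref{trivialfunction} together with $\epsilon_1(l)=\tfrac{\sqrt d}{d}$ follows; this proves \eqref{main1}.

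If $d$ is not a perfect square, the walls are nested semicircles $W_0\supset W_1\supset\cdots$ corresponding to the positive solutions $(x_i,y_i)$ in increasing order, and one isolates the outermost \emph{genuine} wall $W_i$. On its large-volume side $\cI_q\otimes L_x$ is $\sigma$-stable, hence $\mathrm{IT}_0$, so $h^0_{\cI_q,l}(x)=dx^2-1$; on the wall region $\cI_q\otimes L_x$ acquires the Jordan--H\"older factors (twists of) $A$ and of $E[1]$, and since these carry isotropic Mukai vectors and hence admit no further walls, $h^0$ is computed from a suitable semihomogeneous sheaf of rank $\tfrac{x_i+1}{2}$, whose Euler characteristic is the nonnegative perfect square $\tfrac{d(x_i+1)}{2}x^2-2dy_i\,x+\tfrac{x_i-1}{2}$, vanishing at $\tfrac{2y_i}{x_i+1}$ and tangent to $dx^2-1$ at $\tfrac{2y_i}{x_i-1}$; below $\tfrac{2y_i}{x_i+1}$ this contribution vanishes and $h^0_{\cI_q,l}=0$. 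This gives exactly the shape in \eqref{main2}, with $\epsilon_1(l)=\tfrac{2y_i}{x_i-1}$, the option \eqref{trivialfunction} occurring only if no $W_i$ is genuine. For \eqref{main3} it remains to locate $i$. Since the $W_j$ are nested it is enough to show that $W_0$ or $W_1$ is genuine: one studies the wall-crossing of the moduli space $M_\sigma(1,0,-1)\cong S\times\hat S$ across $W_0$ (and, if that fails, across $W_1$), checking that the $\cI_q\otimes P$ lie in the flipped locus; here the construction of the destabilizing factors as twisted pushforwards along isogenies of degrees involving $x_0^2$ and $x_0^2-1=4dy_0^2$, together with the smoothness of the relevant moduli, requires $\Char(\bK)$ to divide neither $x_0^2$ nor $x_0^2-1$. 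If $W_0$ survives this analysis, $i=0$ and $\epsilon_1(l)=\tfrac{2y_0}{x_0-1}$; otherwise the same analysis at $W_1$ gives $i=1$ and $\epsilon_1(l)=\tfrac{2y_1}{x_1-1}=\tfrac{x_0}{2dy_0}$.

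The main obstacle is this last point: proving that the outermost numerical wall is actually realized --- equivalently, that $\cI_q$ genuinely fails to be $\sigma$-stable there rather than surviving to a deeper wall --- and controlling this in positive characteristic. The technical core is the construction of the stable sheaves with the prescribed isotropic Mukai vectors and the verification that the associated wall-crossing flip selects precisely the ideal sheaves of points; the characteristic hypothesis is what keeps the isogeny constructions separable and the relevant moduli spaces smooth, and its failure --- or the lack of sharpness of the wall-crossing analysis even when it holds --- is why in general one can only confine $\epsilon_1(l)$ to the two-element set $\{\tfrac{2y_0}{x_0-1},\tfrac{2y_1}{x_1-1}\}$.
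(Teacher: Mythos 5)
Your treatment of parts \eqref{main1} and \eqref{main2} follows essentially the same route as the paper: reduce to wall-crossing for $\cI_0$ in the $(\alpha,\beta)$-plane, use the minimality of the discriminant of $v(\cI_0)$ (equivalently $v(\cI_0)^2=2$) to force any wall to be totally destabilizing with isotropic factors, and translate the numerical constraints into Pell's equation $x^2-4d\cdot y^2=1$. Your computation of the classes of the factors and of the resulting piecewise-quadratic expressions, including the double roots at $\frac{2\tilde y}{\tilde x+1}$ and the tangency with $dx^2-1$ at $\frac{2\tilde y}{\tilde x-1}$, agrees with \autoref{walls} and the two alternatives of Section~\ref{sec:Upperbound}, so these parts are sound. (One caveat: your assertion that the candidate walls ``are realized'' by Fourier--Mukai transforms of line bundles along isogenies is neither proved nor needed for \eqref{main2}, which only claims a disjunction; as stated it would actually overclaim.)

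Part \eqref{main3} is where there is a genuine gap. You correctly observe that it suffices to exclude the no-wall expression and the walls $W_i$ with $i\ge2$, but the argument you propose --- a wall-crossing/flip analysis of $M_\sigma(1,0,-1)\cong S\times\widehat S$ across $W_0$ and then $W_1$, with destabilizing factors built as pushforwards along isogenies --- is never carried out, and you yourself flag it as ``the main obstacle'' and ``the technical core.'' Moreover the scheme ``check $W_0$; if that fails, check $W_1$'' is not obviously conclusive: nothing in your sketch guarantees that at least one of the two checks succeeds, which is precisely the content to be proved. The paper's mechanism is entirely different and concrete: using Mumford's theta-group representation of $\cG(L^{2x_0y_0})$ on $H^0(S,L^{2x_0y_0})$, \autoref{symcurve} exhibits $x_0^2$ independent curves in $|L^{2x_0y_0}|$ through all $x_0^4$ points of $x_0$-torsion (unexpected, since $h^0(L^{2x_0y_0})=x_0^2(x_0^2-1)<x_0^4$); this nonvanishing, fed into \eqref{explicitfunction}, forces $h^0_{\cI_0,l}(x)>0$ for all $x>\frac{2y_0}{x_0}$, and the identity $\frac{2y_0}{x_0}=\frac{2y_1}{x_1+1}$ (from $x_1=x_0^2+4dy_0^2$, $y_1=2x_0y_0$) then eliminates every candidate except those attached to $(x_0,y_0)$ and $(x_1,y_1)$. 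The hypothesis that $\Char(\bK)$ divides neither $x_0^2$ nor $x_0^2-1$ enters exactly to make Mumford's theory applicable to $L^{2x_0y_0}$, not for smoothness of moduli spaces. Without this construction or a complete substitute, part \eqref{main3} remains unproved.
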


Parts \eqref{main1} and \eqref{main2} of this result are proved in \autoref{sec:Upperbound}. Our main tool is a natural description of cohomological rank functions on abelian surfaces in terms of certain stability conditions on the derived category, which has recently been proved by Lahoz and the author in \cite{LR}. Essentially, this description establishes that $h^0_{\cI_q,l}$ is determined by the Harder-Narasimhan filtrations of $\cI_q$ along the so-called \emph{$(\alpha,\beta)$-plane} of stability conditions.

The key point of this approach is that the potential destabilizing walls for $\cI_q$ are in correspondence with positive solutions to Pell's equation $x^2-4d\cdot y^2=1$ (see \autoref{walls}).  The absence of such solutions when $d$ is a perfect square shows \eqref{main1}, whereas for $d$ not a perfect square one obtains \eqref{main2}.

The corresponding upper bounds for the basepoint-freeness threshold refine those given by Ito for general complex abelian surfaces (\cite{Ito2}). 
In addition, the expressions of \eqref{main1} and \eqref{main2} reveal the differentiability of $h^0_{\cI_q,l}$ at certain rational points; this is relevant with regard to syzygies, since it enables us to apply Ito's refined version of Caucci's criterion. As a result, we have:

\begin{corIntr}\label{syzygies}
Let $(S,l)$ be a $(1,d)$-polarized abelian surface which satisfies the hypothesis of \autoref{main}, and let $L$ be any line bundle representing the polarization $l$.
\begin{enumerate}[\rm (1)]
    \item\label{enum:projnormal} If $d\geq7$, then $L$ is projective normal.
    \item\label{enum:np} If $d>(p+2)^2$ for $p\geq1$, then $L$ satisfies the property $(N_p)$.
\end{enumerate}
\end{corIntr}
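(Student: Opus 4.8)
The plan is to derive \autoref{syzygies} from \autoref{main} together with Ito's refined criterion (\cite[Theorem 1.5]{Ito3}), which replaces the strict inequality $\epsilon_1(l)<\frac{1}{p+2}$ by the weaker hypothesis that $\epsilon_1(l)=\frac{1}{p+2}$ is allowed provided $h^0_{\cI_q,l}$ (equivalently $h^1_{\cI_q,l}$) is differentiable at that point; for $p=0$ the analogous statement due to Jiang--Pareschi/Ito handles projective normality. So the whole corollary reduces to two tasks: bounding $\epsilon_1(l)$ from above, and checking differentiability of $h^0_{\cI_q,l}$ at the relevant rational value when the bound is sharp.

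First I would treat the bound. By \autoref{main}, regardless of which Pell solution governs the function, one always has $\epsilon_1(l)\leq\frac{2y_0}{x_0-1}$ where $(x_0,y_0)$ is the minimal positive solution of $x^2-4dy^2=1$ (in case \eqref{main1}, $\epsilon_1(l)=\frac{\sqrt d}{d}=\frac{1}{\sqrt d}$, which I would check separately and is easier). The key elementary estimate is that $\frac{2y_0}{x_0-1}$ is small when $d$ is large: from $x_0^2-1=4dy_0^2$ one gets $\frac{2y_0}{x_0-1}=\frac{2y_0}{x_0-1}$ and, using $x_0+1>x_0-1$, $\left(\frac{2y_0}{x_0-1}\right)^2=\frac{4y_0^2}{(x_0-1)^2}=\frac{x_0+1}{d(x_0-1)}$. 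Hence $\epsilon_1(l)^2\leq\frac{x_0+1}{d(x_0-1)}$. Since the minimal solution has $x_0\geq3$ (as $x_0=1$ is trivial and $x_0=2$ is impossible: $x_0^2-1=4dy_0^2$ forces $x_0$ odd), we get $\frac{x_0+1}{x_0-1}\leq 2$, so $\epsilon_1(l)^2\leq\frac{2}{d}$, i.e. $\epsilon_1(l)\leq\sqrt{2/d}$. For \eqref{enum:projnormal} we need $\epsilon_1(l)\leq\frac12$ with differentiability (or $<\frac12$ outright): $\sqrt{2/d}<\frac12\iff d>8$, and the boundary case $d=7,8$ will need the more careful analysis below. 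For \eqref{enum:np} we need $\epsilon_1(l)\leq\frac{1}{p+2}$ with differentiability (or strict): $\sqrt{2/d}\leq\frac{1}{p+2}\iff d\geq 2(p+2)^2$, which is weaker than $d>(p+2)^2$, so again a sharper bound is needed and I would instead argue that for $d>(p+2)^2$ one has either strict inequality or equality-with-differentiability at $\frac{1}{p+2}$.

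The main obstacle — and the heart of the proof — is therefore the borderline analysis: showing that even when $\epsilon_1(l)$ equals the threshold $\frac{1}{p+2}$ (for $p\geq 1$) or $\frac12$ (for $p=0$), the function $h^0_{\cI_q,l}$ is differentiable there. Here the explicit piecewise-quadratic formulas in \autoref{main}\eqref{main1}--\eqref{main2} are exactly what is needed. In the perfect-square case \eqref{trivialfunction}, $\epsilon_1(l)=\frac{1}{\sqrt d}$ and the function transitions from $0$ to $dx^2-1$; these two quadratics agree to first order at $x=\frac{1}{\sqrt d}$ (both value and derivative vanish there — indeed $dx^2-1$ has value $0$ and derivative $\frac{2d}{\sqrt d}=2\sqrt d\neq 0$, so actually the function is \emph{not} differentiable; I would need to recheck — more likely the relevant differentiability is at the \emph{other} breakpoint or one uses that the left derivative/right derivative comparison still fits Ito's hypothesis, which only asks differentiability where $h^1$ vanishes, and $h^1\equiv 0$ to the right so the condition is about the right-hand regime). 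In the non-square case the left breakpoint $\frac{2\tilde y}{\tilde x+1}=\epsilon_1(l)$ joins the zero function to the middle quadratic $\frac{d(\tilde x+1)}{2}x^2-2d\tilde y x+\frac{\tilde x-1}{2}$; one computes that at $x=\frac{2\tilde y}{\tilde x+1}$ this quadratic takes the value $0$ (using $\tilde x^2-4d\tilde y^2=1$) and I would verify whether its derivative there is $0$ or not, which decides differentiability. I expect that the correct reading of Ito's criterion makes these computations go through precisely for the stated ranges $d\geq 7$ and $d>(p+2)^2$, with the Pell equation forcing the breakpoints to avoid the bad rational value except in a way that still satisfies the hypothesis; pinning down this last point, including the characteristic restrictions inherited from \autoref{main}\eqref{main3} when they are needed to know \emph{which} formula holds, is where the real care lies. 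Finally I would assemble: for $d$ in the stated range, combine the bound $\epsilon_1(l)\leq\frac{1}{p+2}$ (resp. $\frac12$) from the first paragraph with the differentiability just established, and invoke \cite[Theorem 1.5]{Ito3} (resp. \cite[Corollary E]{JP} / \cite[Theorem 1.1]{Caucci}) to conclude $(N_p)$ (resp. projective normality).
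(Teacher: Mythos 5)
The overall strategy you outline is the same as the paper's: combine the upper bounds from \autoref{main} with Ito's M-regularity criterion (\autoref{ito}.(3)), using \autoref{regularity} to translate M-regularity into vanishing plus $\cC^1$-ness of $h^1_{\cI_0,l}$. But there is a genuine gap at the quantitative heart of the argument. Everything reduces to the arithmetic inequality $\frac{2y_0}{x_0-1}\leq\frac{1}{m}$ for \emph{every} integer $m<\sqrt{d}$ (equivalently, writing $d=m^2+k$, to $ky_0\geq m$); this is exactly the last assertion of the paper's \autoref{correg}, whose proof is a genuine, if elementary, bootstrapping argument showing that $y_0$ cannot be small when $k$ is small. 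Your estimate $\epsilon_1(l)^2\leq\frac{x_0+1}{d(x_0-1)}\leq\frac{2}{d}$ is correct but loses a factor of $\sqrt{2}$, and that loss is fatal on the whole range $(p+2)^2<d<2(p+2)^2$. For instance, for $p=1$, $d=10$ one needs $\epsilon_1(l)\leq\frac{1}{3}$, while $\sqrt{2/10}\approx 0.447$; here the minimal Pell solution is $(x_0,y_0)=(19,3)$ and $\frac{2y_0}{x_0-1}=\frac{1}{3}$ \emph{exactly}, so the borderline you defer to really occurs and is not reached by your bound. Your proposed fallback --- ``argue that for $d>(p+2)^2$ one has either strict inequality or equality-with-differentiability'' --- restates the required inequality without proving it.

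The differentiability discussion is also aimed at the wrong point. What Ito's criterion needs (via \autoref{regularity}.(2)) is that $h^1_{\cI_0,l}$ vanishes and is of class $\cC^1$ at $\frac{1}{p+2}$, not that $h^0_{\cI_0,l}$ is differentiable at its own breakpoint. In the non-square case one computes $h^1_{\cI_0,l}(x)=\frac{d(\widetilde{x}-1)}{2}\bigl(x-\frac{2\widetilde{y}}{\widetilde{x}-1}\bigr)^2$ on the middle interval, a double root at $\epsilon_1(l)$, so $h^1$ is automatically $\cC^1$ wherever it vanishes; this is the content of \autoref{correg}.(2). In the perfect-square case the non-differentiability at $\frac{1}{\sqrt{d}}$ that worried you is irrelevant, since $d>(p+2)^2$ forces $\frac{1}{\sqrt{d}}<\frac{1}{p+2}$ strictly and $h^1$ vanishes identically near $\frac{1}{p+2}$. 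Once you supply the missing inequality $\frac{2y_0}{x_0-1}\leq\frac{1}{p+2}$ (and, for part (1), the strict inequality $\frac{2y_0}{x_0-1}<\frac{1}{2}$ for $d\geq 7$, which holds because equality would force $(d-4)y_0=2$), the argument closes along the lines you sketch.
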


For $\bK=\bC$, we point out that \autoref{syzygies}.\eqref{enum:projnormal} recovers a well known result of Iyer (\cite{iyer}, see also \cite{laz2} for some cases previously covered), and the case $p=1$ of \autoref{syzygies}.\eqref{enum:np} recovers a result of Gross and Popescu (\cite{gp}). For arbitrary $p$, \autoref{syzygies}.\eqref{enum:np} improves the bound ensuring the property $(N_p)$ that was given recently by Ito in \cite[Corollary 4.4]{Ito2}.

In \autoref{sec:Lowerbound} we deal with the proof of \autoref{main}.\eqref{main3}. This is the problem of determining, when $d$ is not a perfect square, which of the potential functions described in \autoref{main}.\eqref{main2} really occur. Modulo certain arithmetic restrictions on $\Char(\bK)$, we prove that only two possibilities may happen: those corresponding to the two smallest positive solutions of Pell's equation.

This is guaranteed by the explicit construction of curves containing all the torsion points of an unexpectedly high order (see \autoref{symcurve}). For this construction, we use the classical theory of theta groups developed by Mumford in \cite{mumford}.

It is worth noting that, for all the non-perfect squares $d$ for which we know the exact value of $\epsilon_1(l)$, the equality $\epsilon_1(l)=\frac{2y_0}{x_0-1}$ holds. In general, this would follow from a small refinement of \autoref{symcurve}, that at present we do not know how to prove (see \autoref{finalrem}.\eqref{minimalsol} for details).

\textbf{Acknowledgements.} This work has benefited from helpful conversations with my advisors Martí Lahoz and Joan Carles Naranjo. Thanks are also due to Federico Caucci for useful comments.

\section{Preliminaries}

\subsection{Cohomological rank functions} Let $(A,l)$ be a $g$-dimensional polarized abelian variety, i.e. $l\in\NS(A)=\Pic(A)/\Pic^0(A)$ is the class of an ample line bundle $L$. We will denote by 
\[
\varphi_l:A\to\Pic^0(A),\;\;p\mapsto t_p^*L\otimes L^{-1}
\]
its polarization isogeny, where $t_p$ stands for the translation by $p\in A$. 

Let $\Db(A)$ be the bounded derived category of $A$. In the paper \cite{JP} (see \cite[Section 2]{Caucci} for positive characteristic), a \emph{cohomological rank function}
\[
h^i_{F,l}:\bQ\to\bQ_{\geq0}
\]
is associated to every object $F\in\Db(A)$ and every $i\in\bZ$. If $x_0=\frac{a}{b}\in\bQ$ with $b\in\bZ_{>0}$, then $h^i_{F,l}(x_0)$ is defined as
\[
h^i_{F,l}(x_0):=\frac{1}{b^{2g}}h^i(A,\mu_b^*F\otimes L^{ab}\otimes\alpha)
\]
for general $\alpha\in\Pic^0(A)$, where $\mu_b:A\to A$ is the multiplication-by-$b$ isogeny. Since $\mu_b^*l=b^2l$ and $\deg(\mu_b)=b^{2g}$, the number $h^i_{F,l}(x_0)$ gives a meaning to the $i$-th (hyper)cohomological rank of $F$ twisted with a (general) representative of the fractional polarization $x_0l$.

These functions are polynomial in the neighborhood of any fixed $x_0\in\bQ$. More explicitly, for any sheaf $E$ let $\chi_{E,l}$ be the Hilbert polynomial of $E$ with respect to $l$. Then for every (rational) $x$ in a right neighborhood of $x_0$, the following equality holds (\cite[Section 2]{JP}):
\begin{equation}\label{explicitfunction}
h^i_{F,l}(x)=\frac{(x-x_0)^g}{\chi(l)}\cdot\chi_{\varphi_l^*R^{g-i}\Phi_{\cP^{\vee}}((\mu_{b}^*F\otimes L^{ab})^{\vee}),l}\left(\frac{1}{b^2(x-x_0)}\right),
\end{equation}
where $\Phi_{\cP^{\vee}}$ denotes the Fourier-Mukai transform with kernel the dual $\cP^\vee$ of the Poincaré bundle.

In this note, we concentrate on the functions $h^i_{\cI_q,l}$ for a (closed) point $q\in A$; by independence of $q$, we fix $q$ to be the origin $0\in A$.
As explained in the introduction, previous work of Jiang-Pareschi, Caucci and Ito shows that they encode information about the polarization $l$:

\begin{thm}[\cite{JP,Caucci,Ito3}] \label{ito}
Let $(A,l)$ be a polarized abelian variety, and let $L$ be any ample line bundle representing the polarization $l$.
\begin{enumerate}[\rm (1)]
    \item $\cI_0\langle l\rangle$ is IT(0) if and only if $L$ is basepoint-free.
    \item\label{ref:projnormal} If $\cI_0\langle\frac{1}{2}l\rangle$ is IT(0), then $L$ is projectively normal.
    \item\label{ref:Np} If $\cI_0\langle\frac{1}{p+2}l\rangle$ is M-regular for some $p\geq1$, then $L$ satisfies the property $(N_p)$.
\end{enumerate}
\end{thm}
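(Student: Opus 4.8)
The plan is to treat the three items separately, according to their depth.

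\emph{Item (1)} admits an elementary direct proof. Tensoring the structure sequence $0\to\cI_0\to\cO_A\to\cO_0\to 0$ with $L\otimes\alpha$, and using that $L\otimes\alpha$ is ample — hence IT$(0)$, by Mumford's index theorem for ample line bundles on abelian varieties \cite{mumford} — the long exact sequence of cohomology yields $H^i(A,\cI_0\otimes L\otimes\alpha)=0$ for $i\ge 2$ together with
\[
H^1(A,\cI_0\otimes L\otimes\alpha)=\coker\bigl(H^0(A,L\otimes\alpha)\to(L\otimes\alpha)|_0\bigr),
\]
which vanishes exactly when $0\notin\mathrm{Bs}\,|L\otimes\alpha|$. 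Hence $\cI_0\langle l\rangle$ is IT$(0)$ if and only if $0\notin\mathrm{Bs}\,|L\otimes\alpha|$ for every $\alpha\in\Pic^0(A)$. For each $p\in A$ one has $L\otimes\varphi_l(p)\cong t_p^*L$, so $\mathrm{Bs}\,|L\otimes\varphi_l(p)|=\mathrm{Bs}\,|L|-p$ and $0\notin\mathrm{Bs}\,|L\otimes\varphi_l(p)|$ if and only if $p\notin\mathrm{Bs}\,|L|$; since $\varphi_l$ is surjective, the condition for \emph{all} $\alpha$ is therefore equivalent to $\mathrm{Bs}\,|L|=\emptyset$, i.e. to $L$ being basepoint-free.

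\emph{Items (2) and (3)} are the substantial ones; here I would simply quote \cite[Corollary~E]{JP}, \cite[Theorem~1.1]{Caucci} and \cite[Theorem~1.5]{Ito3}, the only point to verify on our side being that the hypotheses there match the IT$(0)$ / M-regularity conditions stated here (the hypothesis $\epsilon_1(l)<\frac{1}{p+2}$ of \cite{Caucci} is, a posteriori, stronger than M-regularity of $\cI_0\langle\frac{1}{p+2}l\rangle$, which is the sharp form isolated by \cite{Ito3}). For orientation, the underlying mechanism is the Pareschi–Popa circle of ideas: one notes that $L$ is globally generated — automatic here, since any of these hypotheses forces $\epsilon_1(l)<1$ — introduces the kernel bundle $M_L=\ker(H^0(A,L)\otimes\cO_A\to L)$, reduces projective normality of $L$ to $H^1(A,M_L\otimes L^m)=0$ for all $m\ge1$ and property $(N_p)$ to $H^1(A,M_L^{\otimes(p+1)}\otimes L^m)=0$ for all $m\ge1$ (Koszul cohomology / the $N_p$-lemma), identifies $M_L$ with a pushforward $p_{2*}(p_1^*L\otimes\cI_\Delta)$ from $A\times A$, and finally transports the needed vanishing — through the Fourier–Mukai transform $\FM$ and the stability of M-regularity / IT$(0)$ under tensor products and under $\FM$ — into an M-regularity statement for $\cI_0$ twisted by a fixed fraction of $l$; tracking the twist yields $\frac{1}{2}l$ in the projective-normality case and $\frac{1}{p+2}l$ in the $(N_p)$ case.

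The hard part lies entirely in (2)–(3): one must compress information about the \emph{whole} minimal resolution of $\bigoplus_m H^0(A,L^m)$ — all Koszul groups, all powers $L^m$ — into a single M-regularity condition on one $\bQ$-twist of $\cI_0$. The two friction points are (i) the powers $M_L^{\otimes(p+1)}$, which carry no direct Fourier–Mukai description, forcing one to work on $A^{\times(p+1)}$ and to exploit that M-regularity of $\cI_0\langle\delta\rangle$ is preserved under the relevant tensor operations (this is where Pareschi's original proof of the Lazarsfeld conjecture lives), and (ii), for Ito's sharpening, upgrading the open hypothesis $\epsilon_1(l)<\frac{1}{p+2}$ to M-regularity exactly at $x=\frac{1}{p+2}$, which needs a (semi)continuity input for cohomological rank functions at rational points. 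As this is all carried out in the references, I would not reproduce it; the genuinely new content is confined to item (1) and to the bookkeeping matching the hypotheses.
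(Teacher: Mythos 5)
Your proposal is correct and takes essentially the same route as the paper, which states this theorem purely as a recollection of \cite{JP,Caucci,Ito3} and gives no proof of its own; deferring items (2)--(3) to those references, after checking that M-regularity of $\cI_0\langle\frac{1}{p+2}l\rangle$ is exactly the (weaker) hypothesis isolated by Ito, is precisely what the paper does. Your elementary argument for item (1), via the structure sequence of $\cI_0$ and the identity $L\otimes\varphi_l(p)\cong t_p^*L$ combined with surjectivity of $\varphi_l$, is sound and agrees with the proof in \cite[Section 8]{JP}.
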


The reader is referred to \cite[Section 5]{JP} for the definitions of a $\bQ$-twisted coherent sheaf $F\langle x_0l\rangle$ being IT(0), M-regular or a GV-sheaf. In the particular case $F=\cI_0$ we will use the following characterization, which is an immediate consequence of \cite[Proposition 5.3]{JP}:

\begin{lem}\label{regularity}
Let $x_0\in\bQ$ be a positive rational number. 
\begin{enumerate}[\rm (1)]
    \item\label{GV} $\cI_0\langle x_0l\rangle$ is a GV-sheaf if and only if $h^1_{\cI_0,l}(x_0)=0$.
    \item\label{Mreg} $\cI_0\langle x_0l\rangle$ is M-regular if and only if $h^1_{\cI_0,l}(x_0)=0$ and $h^1_{\cI_0,l}$ is of class $\cC^1$ at $x_0$.
    \item\label{IT0} $\cI_0\langle x_0l\rangle$ is IT(0) if and only if there is $\epsilon>0$ such that $h^1_{\cI_0,l}(x)=0$ for all $x\in(x_0-\epsilon,x_0)$.
\end{enumerate}
\end{lem}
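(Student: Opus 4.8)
The proof is short: the statement is nothing but the specialisation to $F=\cI_0$ of the cohomological characterisations of GV-sheaves, M-regular sheaves and IT(0)-sheaves among $\bQ$-twisted coherent sheaves established in \cite[Proposition 5.3]{JP}. Accordingly, the only step with any content is to check that, for $F=\cI_0$ and $x_0>0$, every condition those characterisations impose on the functions $h^i_{\cI_0,l}$ with $i\geq2$ holds automatically, so that the whole content is carried by $h^1_{\cI_0,l}$.

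To this end I would first clear denominators. Writing $x_0=a/b$ with $a,b\in\bZ_{>0}$, by definition $\cI_0\langle x_0l\rangle$ is GV (resp.\ M-regular, resp.\ IT(0)) precisely when the honest sheaf $G:=\mu_b^*\cI_0\otimes L^{ab}$ is. Since $\mu_b$ is finite and flat, pulling back $0\to\cI_0\to\cO_A\to\cO_0\to0$ along $\mu_b$ gives an exact sequence $0\to\mu_b^*\cI_0\to\cO_A\to T\to0$ with $T:=\mu_b^*\cO_0$ a finite-length sheaf supported on the $b$-torsion. Hence, for every $\alpha\in\Pic^0(A)$, the sequence
\[
0\to G\otimes\alpha\to L^{ab}\otimes\alpha\to T\otimes L^{ab}\otimes\alpha\to0,
\]
the vanishing of the higher cohomology of the finite-length sheaf $T\otimes L^{ab}\otimes\alpha$, and the vanishing $H^i(A,L^{ab}\otimes\alpha)=0$ for $i>0$ (which holds because $L^{ab}\otimes\alpha$, being algebraically equivalent to the ample line bundle $L^{ab}$, is itself ample, and ample line bundles on abelian varieties have no higher cohomology) together yield $H^i(A,G\otimes\alpha)=0$ for all $i\geq2$ and all $\alpha$. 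In particular $h^i_{\cI_0,l}\equiv0$ on $\bQ_{>0}$ for every $i\geq2$.

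Finally I would invoke \cite[Proposition 5.3]{JP}: it characterises GV-ness of $G$ through the vanishing of the generic cohomological ranks $h^i_{\cI_0,l}(x_0)$, M-regularity through that same vanishing together with a $\cC^1$-regularity requirement on the $h^i_{\cI_0,l}$ at $x_0$, and IT(0) through the vanishing of the $h^i_{\cI_0,l}$ on a one-sided neighbourhood of $x_0$. In each of the three cases the requirements with $i\geq2$ are vacuous by the previous paragraph (a function identically zero is trivially $\cC^1$), and $h^0_{\cI_0,l}$ never enters, so the conditions collapse to exactly \eqref{GV}, \eqref{Mreg} and \eqref{IT0}. If anything qualifies as the main point to be careful about, it is the one-sidedness in \eqref{IT0}: IT(0) at $x_0$ forces the vanishing of $h^1_{\cI_0,l}$ on an entire two-sided neighbourhood of $x_0$ by openness of IT(0) in the twist, hence in particular on $(x_0-\epsilon,x_0)$; the reverse implication, that vanishing of $h^1_{\cI_0,l}$ on an open left neighbourhood of $x_0$ already suffices for IT(0) at $x_0$ (note that $h^1_{\cI_0,l}(x_0)=0$ then follows by continuity), is precisely the semicontinuity statement in \cite[Proposition 5.3]{JP} that controls the cohomological jumping locus of $G$ from the left.
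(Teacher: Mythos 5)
Your proof is correct and takes essentially the same route as the paper, which establishes this lemma simply by declaring it an immediate consequence of \cite[Proposition 5.3]{JP}. The one piece of content you supply beyond that citation --- that $h^i_{\cI_0,l}$ vanishes identically on $\bQ_{>0}$ for $i\geq2$, via the flat pullback of $0\to\cI_0\to\cO_A\to\cO_0\to0$ along $\mu_b$ and the vanishing of higher cohomology of ample line bundles on abelian varieties, so that the conditions of the cited proposition collapse onto $h^1$ --- is exactly the detail the paper leaves implicit.
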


\subsection{The $(\alpha,\beta)$-plane of a polarized abelian surface} In this subsection, $(S,l)$ will be a polarized abelian surface. We briefly recall the relation between cohomological rank functions and stability in the \emph{$(\alpha,\beta)$-plane} associated to $l$, which appeared recently in \cite{LR}.

For every $(\alpha,\beta)\in\bR_{>0}\times\bR$, there exists a \emph{Bridgeland stability condition} $\sigma_{\alpha,\beta}=(\Coh^\beta(S),Z_{\alpha,\beta})$ (see \cite[Section 6]{MS}, or \cite{Bridgeland:K3} for the original treatment), where:

\begin{itemize}[\textbullet]
    \item $\Coh^\beta(S)$ is the heart of a bounded t-structure on $\Db(S)$. Concretely, if $\mu_l=\frac{l\cdot\ch_1}{l^2\cdot\ch_0}$ is the slope of a coherent sheaf, complexes $F\in\Coh^\beta(S)$ are those satisfying: $\mu_l(E)\leq\beta$ for every subsheaf $E\subset\cH^{-1}(F)$, $\mu_l(Q)>\beta$ for every quotient $\cH^0(F)\twoheadrightarrow Q$, and $\cH^i(F)=0$ for $i\neq0,-1$.
    
    \vspace{2mm}
    
    \item Let $K_0(\Db(S))$ denote the Grothendieck group of $\Db(S)$. Then $Z_{\alpha,\beta}:K_0(\Db(S))\to\bC$ is a group homomorphism with the following properties:

\begin{enumerate}
    \item\label{BSC1} $Z_{\alpha,\beta}$ factors through the homomorphism $v:K_0(\Db(S))\to\Lambda=\bZ^3$ defined by
    \[
    v(E)=\left(l^2\cdot\ch_0(E),l\cdot\ch_1(E),\ch_2(E)\right)\]
    
    \item\label{BSC2} For every nonzero $E\in\Coh^\beta(S)$ the inequality $\Im Z_{\alpha,\beta}(E)\geq0$ holds, and $\Re Z_{\alpha,\beta}(E)<0$ whenever $\Im Z_{\alpha,\beta}(E)=0$.
    
    \item\label{BSC3} Every object of $\Coh^\beta(S)$ admits a \emph{Harder-Narasimhan} (\emph{HN} for short) \emph{filtration} with respect to the \emph{tilt slope}
    \[
    \nu_{\alpha,\beta}(E):=\left\{
    \begin{array}{c l}
     \frac{-\Re Z_{\alpha,\beta}(E)}{\Im Z_{\alpha,\beta}(E)} & \Im Z_{\alpha,\beta}(E) >0\\
     +\infty & \Im Z_{\alpha,\beta}(E)=0\\
    \end{array}
    \right.
    \]
\end{enumerate}
\end{itemize}

\vspace{2mm}

It is specially relevant that $\sigma_{\alpha,\beta}$ satisfies the \emph{support property} (see \cite[Section 5.2]{MS}) with respect to the following quadratic form in $\Lambda\otimes\bR$:
\[
\odisc=(l\cdot\ch_1)^2-2(l^2\cdot\ch_0)\ch_2=v_1^2-2v_0v_2
\]
This fact, combined with the so-called \emph{Bertram's nested wall theorem}, gives an effective control of wall-crossing in the $(\alpha,\beta)$-plane (see e.g. \cite[Theorem 2.8]{LR}); essentially, the \emph{walls} where $\sigma_{\alpha,\beta}$-semistability varies for objects of a fixed class $v\in\Lambda$ are nested semicircles.

For $\alpha=0$ and $\beta\in\bQ$, the pair $\sigma_{0,\beta}=(\Coh^\beta(S),Z_{0,\beta})$ defines a \emph{weak stability condition}; $Z_{0,\beta}$ satisfies the properties \eqref{BSC1} and \eqref{BSC3} listed above, with the difference that the equality $Z_{0,\beta}(E)=0$ holds for certain nonzero objects $E\in\Coh^\beta(S)$.

As proved in \cite{LR}, the HN filtrations of any object $F\in\Db(S)$ with respect to these weak stability conditions describe the cohomological rank functions $h^i_{F,l}$. In the simplest situation of an object lying in the heart, this description reads as follows:

\begin{prop}[\cite{LR}]\label{CRFstab}
Let $\beta\in\bQ$, and let $0=F_0\hookrightarrow F_1\hookrightarrow...\hookrightarrow F_r=F$ be the HN filtration with respect to $\sigma_{0,\beta}$ of an object $F\in\Coh^\beta(S)$. Then:
\begin{enumerate}[\rm (1)]
    \item $h^i_{F,l}(-\beta)=0$ for every $i\neq0,1$.
    \item $h^0_{F,L}(-\beta)=\displaystyle\sum_{\nu_{0,\beta}({F_j/F_{j-1}})\geq0}\chi_{F_j/F_{j-1},l}(-\beta)$, and $h^1_{F,L}(-\beta)=\displaystyle\sum_{\nu_{0,\beta}({F_j/F_{j-1}})<0}-\chi_{F_j/F_{j-1},l}(-\beta)$.
\end{enumerate}
\end{prop}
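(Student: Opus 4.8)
The plan is to reduce the statement to the case $\beta=0$ and then extract both parts from the Harder--Narasimhan (HN) filtration of $F$ for $\sigma_{0,0}$, with the Fourier--Mukai transform $\FM\colon\Db(S)\to\Db(\widehat S)$ (where $\widehat S=\Pic^0(S)$) as the essential bridge. By the functoriality of cohomological rank functions under pullback along the multiplication isogenies $\mu_b$ and under twists by line bundles --- reflected on the stability side by the corresponding functoriality of the family $\sigma_{0,\beta}$ (pullback along $\mu_b$ only rescales $Z_{0,\beta}$ and fixes $\beta$; tensoring by $M$ sends $\sigma_{0,\beta}$ to $\sigma_{0,\beta-\mu_l(M)}$, carrying HN filtrations to HN filtrations) --- it suffices to prove the proposition for $\beta=0$. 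So let $F\in\Coh^0(S)$, write $\hat h^i(E):=\dim_{\bK}\mathbb H^i(S,E\otimes\alpha)$ for very general $\alpha\in\Pic^0(S)$, and record that $h^i_{F,l}(0)=\hat h^i(F)$, $\chi_{E,l}(0)=\chi(S,E)=\ch_2(E)=-\Re Z_{0,0}(E)$, and $\nu_{0,0}(E)=\chi(S,E)/\Im Z_{0,0}(E)$ whenever $\Im Z_{0,0}(E)>0$. Part~(1) then says $\hat h^i(F)=0$ for $i\neq0,1$, and this holds for every $\alpha$ and for any $E$ with $\cH^j(E)=0$ for $j\neq-1,0$: such an $E\otimes\alpha$ on a surface has $\mathbb H^i=0$ outside $i\in\{-1,0,1,2\}$; by Serre duality ($\omega_S\cong\cO_S$) one has $\mathbb H^2(S,E\otimes\alpha)^{\vee}\cong\Hom_{\Db(S)}(E,\alpha^{-1})$, which by the triangle $\cH^{-1}(E)[1]\to E\to\cH^0(E)$ equals $\Hom_{\Coh(S)}(\cH^0(E),\alpha^{-1})$ and hence vanishes (every quotient of $\cH^0(E)\in\cT$ has $\mu_l>0$, every subsheaf of $\alpha^{-1}$ has $\mu_l\leq0$); and $\mathbb H^{-1}(S,E\otimes\alpha)^{\vee}\cong\Ext^3_{\Db(S)}(E,\alpha^{-1})$, which the same triangle squeezes between the $\Ext^3$ and $\Ext^4$ of coherent sheaves, both zero on a surface.

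The crux is the following dichotomy for the HN factors: if $G\in\Coh^0(S)$ is $\sigma_{0,0}$-semistable, then $\hat h^1(G)=0$ when $\nu_{0,0}(G)\geq0$ (including $\nu_{0,0}(G)=+\infty$ and the case $Z_{0,0}(G)=0$), while $\hat h^0(G)=0$ when $\nu_{0,0}(G)<0$; combined with Part~(1) and $\chi(S,G)=\hat h^0(G)-\hat h^1(G)$ this pins $\bigl(\hat h^0(G),\hat h^1(G)\bigr)$ down to $\bigl(\chi(S,G),0\bigr)$ in the first case and $\bigl(0,-\chi(S,G)\bigr)$ in the second. The natural route is via $\FM$: by cohomology and base change, $\hat h^i(G)$ is the rank at a general point of $\widehat S$ of the cohomology sheaf $\cH^i\bigl(\FM(G)\bigr)$, so the assertion is that $\FM$ sends a $\sigma_{0,0}$-semistable object to a complex generically concentrated in degree $0$ if $\nu_{0,0}\geq0$ and in degree $1$ if $\nu_{0,0}<0$ --- i.e.\ that such a $G$ satisfies WIT with the expected index. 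This is exactly the compatibility of $\FM$ with the torsion pair $(\cT,\cF[1])$ cutting out $\Coh^0(S)$, equivalently with the limit weak stability condition $\sigma_{0,0}$; proving it is where one must bring in the honest Bridgeland conditions $\sigma_{\alpha,0}$ for small $\alpha>0$, the inequality $\odisc\geq0$ for their semistable objects, and the explicit action of $\FM$ on $\Db$ of an abelian surface. \textbf{This step is the main obstacle}; everything else is diagram-chasing. (A more elementary alternative: for general $\alpha$, $\hat h^0(G)=\dim\Ext^1_{\Coh^0(S)}(\alpha^{-1}[1],G)$, and any nonsplit extension $0\to G\to X\to\alpha^{-1}[1]\to0$ has $Z_{0,0}(X)=Z_{0,0}(G)$ since $Z_{0,0}(\alpha^{-1}[1])=0$; one then shows such an $X$ cannot exist when $G$ is $\sigma_{0,0}$-semistable with $\nu_{0,0}(G)<0$, and dually for $\hat h^1$.)

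Granting the dichotomy, Part~(2) is formal. Let $0=F_0\subset F_1\subset\cdots\subset F_r=F$ be the $\sigma_{0,0}$-HN filtration, $Q_j:=F_j/F_{j-1}$, with $\nu_{0,0}(Q_1)>\cdots>\nu_{0,0}(Q_r)$, and let $k$ be the largest index with $\nu_{0,0}(Q_k)\geq0$; fix $\alpha$ very general so that Part~(1) and the dichotomy hold simultaneously for all the finitely many objects $Q_j$, $F_j$, $F/F_j$. An upward induction on $j\leq k$, using the long exact sequence of $0\to F_{j-1}\to F_j\to Q_j\to0$ after $\otimes\alpha$ --- in which the connecting maps vanish because $\mathbb H^{-1}(Q_j\otimes\alpha)=0$ by Part~(1) and $\mathbb H^{1}(F_{j-1}\otimes\alpha)=0$ by induction (the factors $Q_{\leq k}$ have $\nu_{0,0}\geq0$) --- yields $\mathbb H^i(F_k\otimes\alpha)=0$ for $i\neq0$ and $\hat h^0(F_k)=\sum_{j\leq k}\chi(S,Q_j)$. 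A dual downward induction gives $\mathbb H^i(F/F_k\otimes\alpha)=0$ for $i\neq1$ and $\hat h^1(F/F_k)=-\sum_{j>k}\chi(S,Q_j)$. Finally, in the long exact sequence of $0\to F_k\to F\to F/F_k\to0$ after $\otimes\alpha$ all connecting maps vanish (the sub contributes only in degree $0$, the quotient only in degree $1$), so $\hat h^0(F)=\sum_{j\leq k}\chi(S,Q_j)$, $\hat h^1(F)=-\sum_{j>k}\chi(S,Q_j)$, and $\mathbb H^i(F\otimes\alpha)=0$ for $i\neq0,1$. Since $\nu_{0,0}(Q_j)\geq0$ amounts to $\chi(S,Q_j)\geq0$ when $\Im Z_{0,0}(Q_j)>0$, and the possible $Z_{0,0}=0$ factors lie among $j\leq k$ with $\chi(S,Q_j)=0$, these are exactly the two asserted formulas; reversing the reductions of the first paragraph delivers the statement for every $\beta\in\bQ$. $\square$
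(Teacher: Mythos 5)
Your overall architecture is the right one, and it matches how this result is actually established in \cite{LR}: reduce to $\beta=0$ via $\mu_b$ and twisting, prove a generic-vanishing dichotomy for $\sigma_{0,0}$-semistable objects, and then assemble the formula by a two-sided induction along the HN filtration. The reduction step, the vanishing of $h^i$ for $i\neq 0,1$ via Serre duality and the torsion pair, and the long-exact-sequence bookkeeping in your last paragraph are all correct as written. However, there is a genuine gap, and you have flagged it yourself: the entire mathematical content of the proposition is concentrated in the dichotomy that a $\sigma_{0,0}$-semistable $G\in\Coh^0(S)$ has $h^1(G\otimes\alpha)=0$ for general $\alpha$ when $\nu_{0,0}(G)\geq 0$, and $h^0(G\otimes\alpha)=0$ when $\nu_{0,0}(G)<0$. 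You describe what would need to be shown (that $\FM$ sends such a $G$ to a complex generically concentrated in a single degree determined by the sign of $\nu_{0,0}$) and correctly identify the ingredients (the honest conditions $\sigma_{\alpha,0}$ for small $\alpha>0$, the support property $\odisc\geq 0$, and the interaction of $\FM$ with these stability conditions), but you do not prove it. This is not a routine verification: it is precisely the point where one must show that the Fourier--Mukai transform intertwines the weak stability condition $\sigma_{0,0}$ with another (weak) stability condition on $\widehat S$ whose heart detects the cohomological degree, and it is the reason the statement is a theorem of \cite{LR} rather than formal bookkeeping.

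Your parenthetical ``elementary alternative'' does not close the gap either. It is true that $h^0(G\otimes\alpha)=\dim\Ext^1_{\Coh^0(S)}(\alpha^{-1}[1],G)$ and that $Z_{0,0}(\alpha^{-1}[1])=0$, but semistability of $G$ with $\nu_{0,0}(G)<0$ does not by itself forbid a nonsplit extension $0\to G\to X\to\alpha^{-1}[1]\to 0$ in the heart: $\alpha^{-1}[1]$ has $\nu_{0,0}=+\infty$, so nothing about the HN filtration of $X$ is immediately contradicted, and one still has to use that $\alpha$ is \emph{general} (i.e.\ a genuine generic-vanishing input, again most naturally packaged through $\FM$). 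So as it stands the proposal is an accurate outline with the central lemma asserted rather than proved.
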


\subsection{The theta group of an ample line bundle} Let $(A,l)$ be a polarized abelian variety, and let $L$ be an ample line bundle representing $l$. We give a quick review of the representation of the theta group $\cG(L)$ on $H^0(A,L)$, explicitly described by Mumford in \cite{mumford}.

Assume that $\Char(\bK)$ does not divide $h^0(L)$. This guarantees that the polarization isogeny
$\varphi_l:A\to\Pic^0(A)$
is separable.
We will write $K(L):=\ker(\varphi_l)$; for instance, if $L$ is very ample embedding $A$ in $\bP(H^0(A,L)^\vee)$, then the points $p\in K(L)$ are those for which the translation $t_p$ on $A$ extends to a projectivity of $\bP(H^0(A,L)^\vee)$.

This projective representation comes from the aforementioned representation of the \emph{theta group}
\[
\cG(L):=\{(x,\varphi)\mid x\in K(L),\;\;\varphi:L\overset{\cong}{\longrightarrow}t_x^*L\},\;\;\;(y,\psi)\cdot(x,\varphi)=(x+y,t_x^*\psi\circ\varphi)
\]
on $H^0(A,L)$. Note that $\cG(L)$ fits into a short exact sequence
\[
1\to\bK^*\to\cG(L)\to K(L)\to 0,
\]
but it is far from being abelian. Indeed, the skew-symmetric pairing $e^L:K(L)\times K(L)\to\bK^*$ measuring the noncommutativity of $\cG(L)$ is  non-degenerate (see \cite[Page 293]{mumford}).

The representation of $\cG(L)$ on $H^0(A,L)$ is defined as follows: every $(x,\varphi)\in\cG(L)$ induces
\[
U_{(x,\varphi)}:H^0(A,L)\to H^0(A,L),\;\;\;s\mapsto t_{-x}^*(\varphi(s))
\]

\begin{thm}[\cite{mumford}]\label{mumford}
With the notations above, the following statements hold:
\begin{enumerate}[\rm (1)]
    \item $K(L)=A(L)\oplus B(L)$, where $A(L),B(L)\subset K(L)$ are maximal totally isotropic subgroups with respect to $e^L$. Moreover, if $L$ is of type $\delta=(d_1,...,d_g)$, then $A(L)\cong \sfrac{\bZ}{d_1}\oplus...\oplus\sfrac{\bZ}{d_g}$ and $B(L)\cong\widehat{A(L)}=\Hom_\bZ(A(L),\bK^*)$ via the pairing $e^L$.
    \item As a group, $\cG(L)$ is isomorphic to $\cG(\delta):=\bK^*\times A(L)\times\widehat{A(L)}$ with the operation
    \[
    (\alpha,t,l)\cdot(\alpha',t',l')=(\alpha\alpha'\cdot l'(t),t+t',l\cdot l')
    \]
    \item\label{repthetagroup} The representation of $\cG(L)$ on $H^0(A,L)$ is isomorphic to the representation of $\cG(\delta)$ on
    \[
    V(\delta)=\{\bK\text{-valued functions on $A(L)=\sfrac{\bZ}{d_1}\oplus...\oplus\sfrac{\bZ}{d_g}$}\}
    \]
    given, for $(\alpha,t,l)\in\cG(\delta)$ and $f\in V(\delta)$, as follows:
    \[
    \left((\alpha,t,l)\cdot f\right)(x)=\alpha\cdot l(x)\cdot f(t+x)
    \]
    \item Assume that $\Char(\bK)\neq 2$ and $L$ is totally symmetric: namely, there exists an isomorphism $L\cong i^*L$, acting as $+1$ simultaneously on all the fibers $L(p)$ of 2-torsion points $p\in A_2$. Then the inversion map $i:A\to A$ extends to a projectivity of $\bP(H^0(A,L)^\vee)$; under the isomorphism $H^0(A,L)\cong V(\delta)$ of \eqref{repthetagroup}, this projectivity is obtained from
    \[
    \tilde{i}:V(\delta)\to V(\delta),\;\;\left(\tilde{i}\cdot f\right)(x)=f(-x)
    \]
\end{enumerate}
\end{thm}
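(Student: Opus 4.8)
This statement is classical and due to Mumford \cite{mumford}; I would recall only the skeleton of the argument. The plan is to settle the group-theoretic assertions \textup{(1)}--\textup{(2)} first, and then to deduce the representation-theoretic ones \textup{(3)}--\textup{(4)} from the finite Stone--von Neumann theorem.

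For \textup{(1)}, I would start from the fact, recalled above, that $e^L$ is a non-degenerate (alternating) pairing on the finite abelian group $K(L)=\ker\varphi_l$; under the running hypothesis that $\Char(\bK)$ does not divide $h^0(L)$, this makes $K(L)$ a finite symplectic abelian group (after fixing roots of unity to identify the values of $e^L$ with $\bQ/\bZ$), and the normal form of such a group yields both the Lagrangian splitting $K(L)=A(L)\oplus B(L)$ and a perfect pairing $A(L)\times B(L)\to\bK^*$, hence $B(L)\cong\widehat{A(L)}$. The identification $A(L)\cong\bZ/d_1\oplus\cdots\oplus\bZ/d_g$ for $L$ of type $\delta=(d_1,\dots,d_g)$ then comes from computing the cokernel of the polarization isogeny explicitly. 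For \textup{(2)}, I would choose set-theoretic lifts to $\cG(L)$ of generators of $A(L)$ and of $B(L)$; their commutators in $\cG(L)$ are prescribed by $e^L$, and this rigidifies the central extension $1\to\bK^*\to\cG(L)\to K(L)\to0$ to the Heisenberg group $\cG(\delta)$ --- in other words, it amounts to a choice of theta structure.

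For \textup{(3)}, the decisive input is the Stone--von Neumann theorem for $\cG(\delta)$: up to isomorphism there is a unique irreducible representation on which the central $\bK^*$ acts through the tautological character $\alpha\mapsto\alpha$, and its dimension equals $d_1\cdots d_g$. I would then check directly, from the formula for $U_{(x,\varphi)}$, that $\bK^*\subset\cG(L)$ acts on $H^0(A,L)$ by the corresponding scalars, and recall that $\dim H^0(A,L)=d_1\cdots d_g$; hence $H^0(A,L)$ is such an irreducible representation and must be isomorphic to $V(\delta)$. That the action of $\cG(\delta)$ on $V(\delta)$ is given by the stated formula is a routine verification, which one transports to $H^0(A,L)$ through the theta structure of \textup{(2)}. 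Finally, for \textup{(4)}, total symmetry of $L$ supplies a distinguished isomorphism $L\cong i^*L$ acting by $+1$ on the $2$-torsion fibers, hence an involution of $H^0(A,L)$; this involution intertwines the $\cG(L)$-action with its conjugate under inversion, so by Schur's lemma and the irreducibility from \textup{(3)} it is determined up to scalar, and the model $V(\delta)$ --- where $f(x)\mapsto f(-x)$ plainly has all these properties --- identifies it with $\tilde i$; the normalization at the $2$-torsion fibers removes the remaining scalar (here $\Char(\bK)\neq2$ is used).

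The only genuinely delicate point in this programme, as always in this circle of ideas, is the bookkeeping: which character of the central $\bK^*$ actually appears, and the sign and normalization choices in \textup{(2)} and \textup{(4)}. The representation-theoretic heart is the finite Stone--von Neumann theorem and is entirely standard.
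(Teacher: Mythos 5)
This is a cited classical result (Mumford's theory of theta groups); the paper offers no proof of its own, and your sketch reproduces the standard argument from \cite{mumford} — symplectic normal form and theta structures for (1)--(2), the finite Stone--von Neumann theorem plus the dimension count $\dim H^0(A,L)=d_1\cdots d_g$ for (3), and Schur's lemma with the $2$-torsion normalization for (4). This is essentially the same approach as the source, and the outline is correct.
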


The main advantage of this description is the existence of a canonical basis for $V(\delta)$, which allows an explicit treatment of the endomorphisms $U_{(x,\varphi)}$ in coordinates.
We will use this approach in the proof of \autoref{symcurve}.

\section{Upper bounds for \texorpdfstring{$\epsilon_1(l)$}{epsilon1(l)}}\label{sec:Upperbound}

Throughout this section, $(S,l)$ will be a polarized abelian surface satisfying the hypothesis of \autoref{main}, namely: $l$ is of type $(1,d)$, and for every divisor class $D$ we have $l^2|D\cdot l$.

Since $\cI_0$ is a slope-semistable sheaf, it follows from the very definition that $\cI_0\in\Coh^\beta(S)$ for every $\beta<0$. Hence we may apply \autoref{CRFstab} to describe the cohomological rank functions of $\cI_0$ for $x\geq0$.

Moreover, since $\cI_0$ is a Gieseker semistable sheaf, $\cI_0$ is $\sigma_{\alpha,\beta}$-semistable for every $\beta<0$ and $\alpha\gg0$ (\cite[Proposition 14.2]{Bridgeland:K3}). Thus our problem is reduced to understand how the HN filtration of $\cI_0$ with respect to $\sigma_{\alpha,\beta}$ varies, as $\alpha$ decreases.

To this end, observe that $\odisc(\cI_0)=2l^2(=4d$, by Riemann-Roch) takes the minimum possible positive value; indeed, by our assumptions on $(S,l)$ we have $4d|\odisc(v(E))$ for every $E\in\Db(S)$. In terms of wall-crossing this is a strong constraint, which guarantees one of the following conditions (see \cite[Subsection 7.2]{LR}):

\begin{enumerate}[\rm (1)]
    \item\label{enum:function1} Either $\cI_0$ is $\sigma_{\alpha,\beta}$-semistable for every $\beta<0$ and $\alpha>0$, in which case $h^0_{\cI_0,l}$ reads
    \[
    h^0_{\cI_0,l}(x)=\left\{
    \begin{array}{c l}
     0 & x \leq \frac{\sqrt{d}}{d}\\
     \chi_{\cI_0,l}(x)=dx^2-1 & x \geq \frac{\sqrt{d}}{d}\\
    \end{array}
    \right.
    \]
    In particular, the functions $h^i_{\cI_0,l}$ ($i=0,1$) are not of class $\cC^1$ at $\frac{\sqrt{d}}{d}$.
    
    \vspace{1.5mm}
    
    \item\label{enum:function2} Or $\cI_0$ destabilizes along a semicircular wall $W$ defined by a short exact sequence $0\to E\to \cI_0\to Q\to 0$ in $\Coh^{-\frac{\sqrt{d}}{d}}(S)$, with $\odisc(E)=0=\odisc(Q)$. If $p_Q<p_E$ are the intersection points of this semicircle with the line $\alpha=0$, then
    \[
    h^0_{\cI_0,l}(x)=\left\{
    \begin{array}{c l}
    0 & x\leq -p_E\\
    \chi_{E,l}(x) & -p_E\leq x\leq-p_Q\\
    \chi_{\cI_0,l}(x)=dx^2-1 & x\geq-p_Q\\
    \end{array}
    \right.
    \]
    \noindent In particular, the cohomological rank functions $h^i_{\cI_0,l}$ ($i=0,1$) are $\cC^1$ at $-p_E$ and $-p_Q$.
\end{enumerate}

\vspace{1.5mm}

\begin{lem}\label{walls}
Let $0\to E\to \cI_0\to Q\to 0$ be a destabilizing short exact sequence as in \eqref{enum:function2}. Then $v(E)=(d(\widetilde{x}+1),-2d\widetilde{y},\frac{\widetilde{x}-1}{2})$ and $v(Q)=((1-\widetilde{x})d,2d\widetilde{y},-\frac{\widetilde{x}+1}{2})$, where $(\widetilde{x},\widetilde{y})$ is a positive nontrivial solution to Pell's equation $x^2-4d\cdot y^2=1$.
\end{lem}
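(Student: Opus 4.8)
The plan is to turn the numerical data of case~\eqref{enum:function2} into Pell's equation. First I would record that $\ch(\cI_0)=(1,0,-1)$ and $l^2=2d$, so $v(\cI_0)=(2d,0,-1)$ and $\odisc(\cI_0)=4d$. Since $4d\mid\odisc(v(E'))$ for every $E'\in\Db(S)$ by our standing hypothesis, and $4d$ is the minimal positive value of $\odisc$, a destabilizing subobject $E$ and quotient $Q$ on the wall $W$ necessarily satisfy $\odisc(E)=\odisc(Q)=0$; this is precisely the description recalled in case~\eqref{enum:function2}, which I would quote. Writing $v(E)=(a,b,c)\in\bZ^3$, additivity of $v$ gives $v(Q)=(2d-a,\,-b,\,-1-c)$.

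The heart of the proof is elementary. Expanding $\odisc(E)=b^2-2ac=0$ and $\odisc(Q)=b^2-2(2d-a)(-1-c)=0$ and subtracting, the quadratic terms cancel and I get the linear relation $a=2d(c+1)$, i.e. $\ch_0(E)=\ch_2(E)+1$. Substituting back into $\odisc(E)=0$ gives $b^2=4dc(c+1)$. Now I invoke the hypothesis $l^2\mid D\cdot l$ once more, applied to $D=\ch_1(E)$: it forces $2d\mid b$, so I may write $b=-2d\widetilde{y}$ with $\widetilde{y}\in\bZ$, and the equation becomes $d\widetilde{y}^2=c(c+1)$. Setting $\widetilde{x}:=2c+1$ (automatically odd), a one-line computation gives $\widetilde{x}^2-4d\widetilde{y}^2=(2c+1)^2-4c(c+1)=1$. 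Rewriting $a=d(\widetilde{x}+1)$ and $c=\tfrac{\widetilde{x}-1}{2}$ produces $v(E)=\big(d(\widetilde{x}+1),\,-2d\widetilde{y},\,\tfrac{\widetilde{x}-1}{2}\big)$, and $v(Q)=v(\cI_0)-v(E)$ is the stated class.

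What remains is to see that $(\widetilde{x},\widetilde{y})$ may be taken positive and is nontrivial; this is the one genuinely delicate point, because the trivial solution $(1,0)$ (for which $v(E)=(2d,0,0)=v(\cO_S)$) satisfies all of the above numerically and must be ruled out on geometric grounds. Here I would pass from Chern classes to sheaves. The long exact sequence of cohomology sheaves of the triangle $E\to\cI_0\to Q$ gives $\cH^{-1}(E)=0$ (using that $\cI_0$ is a sheaf and $Q$ lies in the heart), so $E$ is a sheaf; it is torsion-free, since a torsion subsheaf of $E$ would be a torsion subsheaf of $\cI_0$, hence zero. Thus $\ch_0(E)=\rk(E)\geq1$, so $c\geq0$ and $\widetilde{x}=2c+1\geq1$. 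The case $c=0$ is excluded because it would force $E\cong\cO_S$, which is not a subobject of $\cI_0$ in $\Coh^{-\sqrt{d}/d}(S)$ (indeed $\Hom(\cO_S,\cI_0)=H^0(\cI_0)=0$); hence $\rk(E)\geq2$ and $c\geq1$. For the sign of $\widetilde{y}$: from $v(Q)$ one reads $\ch_0(Q)=-c<0$, so $\cH^{-1}(Q)$ is a nonzero torsion-free sheaf which, being the shifted part of an object of $\Coh^{-\sqrt{d}/d}(S)$, has all $\mu_l$-slopes $\leq-\sqrt{d}/d<0$; in particular $l\cdot\ch_1(\cH^{-1}(Q))<0$. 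On the other hand $\cH^0(Q)$, the cokernel of the (generically surjective) map $E\to\cI_0$, is a torsion sheaf, so $l\cdot\ch_1(\cH^0(Q))\geq0$. Hence $2d\widetilde{y}=l\cdot\ch_1(Q)=l\cdot\ch_1(\cH^0(Q))-l\cdot\ch_1(\cH^{-1}(Q))>0$, so $\widetilde{y}>0$.

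The algebra in the middle step is routine; the part requiring care is the last paragraph, where one has to descend from the numerical classes to the actual structure of $E$ and $Q$ inside $\Coh^{-\sqrt{d}/d}(S)$ --- using torsion-freeness of $\cI_0$ and membership of $\cH^{-1}(Q)$ in the negative-slope part of the torsion pair --- in order to fix the signs of $\widetilde{x},\widetilde{y}$ and discard the trivial Pell solution. (That $\Coh^{-\sqrt{d}/d}(S)$ is the relevant heart and that $\cI_0$ lies in it is part of the setup recalled just before the statement, following \cite{LR}.)
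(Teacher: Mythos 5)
Your proposal is correct and follows essentially the same route as the paper: write down the numerical classes, use $\odisc(E)=\odisc(Q)=0$ together with the divisibility hypothesis to reduce to $\widetilde{x}^2-4d\widetilde{y}^2=1$, and then use the structure of $E$ and $Q$ inside $\Coh^{-\sqrt{d}/d}(S)$ to fix the signs and exclude the trivial solution. The only (harmless) divergence is in the sign of $\widetilde{y}$: the paper reads it off from the wall endpoint $p_E=\mu_l(E)$ lying in the region $\beta<0$, whereas you bound $l\cdot\ch_1$ of the two cohomology sheaves of $Q$; both arguments work.
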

\begin{proof}
By the assumption $l^2|D\cdot l$ for every divisor class $D$, we may write  $v(E)=(2dr,2dc,\chi)$ and $v(Q)=(2d(1-r),-2dc,-1-\chi)$ for certain integers $r,c$ and $\chi$. 
The condition $\odisc(E)=\odisc(Q)$ is easily checked to read as $r=\chi+1$. 

Imposing now $\odisc(E)=0$ gives
$\chi(\chi+1)-dc^2=0$, which after multiplying by 4 and adding 1 at both sides, becomes $(2\chi+1)^2-4d\cdot c^2=1$.
Therefore, $(2\chi+1,c)$ is a solution to the equation $x^2-4d\cdot y^2=1$.
Note that this solution must be non-trivial: otherwise, either $E$ or $Q$ would have class $v=(0,0,-1)$, which is impossible.

Finally, we have to determine the signs of the solution $(2\chi+1,c)$ to Pell's equation. Since $E$ is a subobject of the torsion-free sheaf $\mathcal{I}_0$ in the category $\Coh^{-\frac{\sqrt{d}}{d}}(S)$, it follows that $E$ is a sheaf with $r=\ch_0(E)>0$ (hence $2\chi+1>0$). Moreover, since $\odisc(E)=0$, the right intersection point $p_E$ of $W$ with the $\beta$-axis equals $\mu_l(E)=\frac{c}{r}$ (see e.g. \cite[Theorem 2.8]{LR}). $W$ being a wall for $\cI_0$, it lies entirely in the region with $\beta<0$; this gives $c<0$, which finishes the proof.
\end{proof}

Combining this characterization of the walls with the previous description of the function $h^0_{\cI_0,l}$, one concludes the proof of \autoref{main}:

\begin{proof}[Proof of \autoref{main}.\eqref{main1}-\eqref{main2}]
If $d$ is a perfect square (equivalently, $4d$ is a perfect square), then Pell's equation involved in \autoref{walls} admits only trivial solutions, so $\mathcal{I}_0$ is $\sigma_{\alpha,\beta}$-semistable along the whole region $\beta<0$. In this case, $h^0_{\cI_0,l}$ admits the expression given in \eqref{enum:function1}.

Now assume that $d$ is not a perfect square. If $\cI_0$ destabilizes (equivalently, $h^0_{\cI_0,l}$ is not the function given in \eqref{enum:function1}), then by \autoref{walls} the destabilizing wall corresponds to a positive nontrivial solution $(\widetilde{x},\widetilde{y})$ of $x^2-4d\cdot y^2=1$, for which the classes $v(E)$ and $v(Q)$ are known. Combining these classes with \eqref{enum:function2}, we obtain the explicit expression of $h^0_{\cI_0,l}$. 

Finally, observe that in the same way as the quotients $\frac{\widetilde{y}}{\widetilde{x}}$ converge to $\frac{\sqrt{d}}{2d}$, the walls accumulate towards the point $-\frac{\sqrt{d}}{d}$ in the $\beta$-axis:

\begin{figure}[ht]
\definecolor{ffqqqq}{rgb}{1,0,0}
\definecolor{qqwuqq}{rgb}{0,0.39215686274509803,0}
\begin{tikzpicture}[line cap=round,line join=round,>=triangle 45,x=17cm,y=17cm]

\clip(-0.683667693992299,-0.01986871892054239) rectangle (-0.0000000001,0.19930340931194237);
\draw [samples=50,domain=-0.6:0,rotate around={0:(0,0)},xshift=0cm,yshift=0cm,line width=0.8pt,dash pattern=on 4pt off 3pt,color=qqwuqq] plot ({0.4472135954999579*(-1-(\x)^2)/(1-(\x)^2)},{0.4472135954999579*(-2)*(\x)/(1-(\x)^2)});
\draw [shift={(-0.45,0)},line width=1.5pt,color=ffqqqq]  plot[domain=0:3.141592653589793,variable=\t]({1*0.05*cos(\t r)+0*0.05*sin(\t r)},{0*0.05*cos(\t r)+1*0.05*sin(\t r)});
\draw [shift={(-0.44716337453437993,0)},line width=1.5pt,color=ffqqqq]  plot[domain=0:3.141592653589793,variable=\t]({1*0.002836625465620024*cos(\t r)+0*0.002836625465620024*sin(\t r)},{0*0.002836625465620024*cos(\t r)+1*0.002836625465620024*sin(\t r)});
\draw [shift={(-0.46286426423792154,0)},line width=1.5pt,color=ffqqqq]  plot[domain=0:3.141592653589793,variable=\t]({1*0.11934541092355616*cos(\t r)+0*0.11934541092355616*sin(\t r)},{0*0.11934541092355616*cos(\t r)+1*0.11934541092355616*sin(\t r)});
\draw [line width=0.7pt,dash pattern=on 4pt off 3pt] (-0.3435,0) -- (-0.3435,0.34430340931194237);
\draw [line width=0.7pt] (-0.15,0) -- (-0.15,0.34430340931194237);
\draw [line width=0.7pt,dash pattern=on 4pt off 3pt] (-0.44722,0) -- (-0.44722,0.34430340931194237);
\draw [line width=0.7pt,dash pattern=on 4pt off 3pt] (-0.5821,0) -- (-0.5821,0.34430340931194237);
\draw [line width=0.7pt] (-1,0) -- (0,0);
\begin{scriptsize}
\draw[color=qqwuqq] (0.9004810638190032,0.7555577912659127) node {$ec1$};
\end{scriptsize}

\begin{footnotesize}
\draw[color=black] (-0.12,0.185) node {$\beta=0$};
\draw[color=black] (-0.292,0.185) node {$\beta=-\frac{2y_0}{x_0+1}$};
\draw[color=black] (-0.632,0.185) node {$\beta=-\frac{2y_0}{x_0-1}$};
\draw[color=black] (-0.405,0.185) node {$\beta=-\frac{\sqrt{d}}{d}$};
\end{footnotesize}
\end{tikzpicture}
\caption{Possible walls for $\cI_0$ parametrized by solutions to Pell's equation}
\label{fig:treem}
\end{figure}
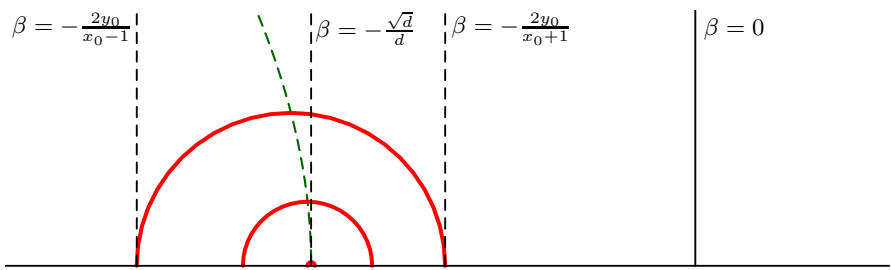

Hence the largest possible wall is associated to the minimal solution $(x_0,y_0)$, and the inequality $\epsilon_1(l)\leq\frac{2y_0}{x_0-1}$ follows.
\end{proof}

\begin{rem}
Upper bounds for $\epsilon_1(l)$ have been given by Ito for general abelian surfaces over $\bC$, using completely different techniques (see \cite[Proposition 4.4]{Ito2}). When $d$ is a perfect square, he already obtained the equality $\epsilon_1(l)=\frac{\sqrt{d}}{d}$ (and thus the expression for $h^0_{\cI_0,l}$).

On the other hand, for $d$ not a perfect square our upper bound refines the one given by Ito. Indeed, both bounds coincide for several values of $d$, but in general the inequality $\epsilon_1(l)\leq\frac{2y_0}{x_0-1}$ is stronger (e.g. $d=7,11,13,19,21,22,23,...$).
\end{rem}

One of the advantages of our approach is that it also controls the differentiability of the functions, which is meaningful in terms of M-regularity. Indeed, as a consequence of \autoref{main} and \autoref{regularity} we obtain:

\begin{cor}\label{correg}
Let $(S,l)$ be a polarized abelian surface satisfying the hypothesis of \autoref{main}.
\begin{enumerate}[\rm (1)]
    \item If $d$ is a perfect square, then $\cI_0\langle\frac{\sqrt{d}}{d}l\rangle$ is a GV-sheaf which is not M-regular.
    \item If $d$ is not a perfect square, then $\cI_0\langle\frac{2y_0}{x_0-1}l\rangle$ is M-regular.
\end{enumerate}
In particular, for $m\in\bZ_{>0}$ $\cI_0\langle\frac{1}{m}l\rangle$ is M-regular if and only if $m<\sqrt{d}$ (i.e. $m^2< d$).
\end{cor}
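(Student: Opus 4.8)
The plan is to rewrite everything in terms of $h^1_{\cI_0,l}$ and then read off M-regularity from the criterion in \autoref{regularity}. Since $\cI_0\in\Coh^\beta(S)$ for every $\beta<0$, \autoref{CRFstab} gives $h^i_{\cI_0,l}(x)=0$ for $i\neq 0,1$ and all $x>0$, so $h^1_{\cI_0,l}(x)=h^0_{\cI_0,l}(x)-\chi_{\cI_0,l}(x)=h^0_{\cI_0,l}(x)-(dx^2-1)$ on $\bQ_{>0}$. Feeding in the formulas of \autoref{main} — and observing that, after subtracting $\chi_{\cI_0,l}$, the middle quadratic in \autoref{main}.\eqref{main2} has vanishing discriminant, so that $h^1_{\cI_0,l}$ equals $\tfrac{d(\tilde x-1)}{2}\bigl(x-\tfrac{2\tilde y}{\tilde x-1}\bigr)^2$ on the middle interval — one finds in every case: $h^1_{\cI_0,l}>0$ on $(0,\epsilon_1(l))$, $h^1_{\cI_0,l}\equiv 0$ on $[\epsilon_1(l),\infty)$, and $h^1_{\cI_0,l}$ is of class $\cC^1$ at $\epsilon_1(l)$ precisely when $h^0_{\cI_0,l}$ is \emph{not} the function \eqref{trivialfunction}. (In the non-trivial case $\epsilon_1(l)=\tfrac{2\tilde y}{\tilde x-1}$ is the larger of the two transition points, where $h^i_{\cI_0,l}$ is $\cC^1$ by \eqref{enum:function2}; in the trivial case $\epsilon_1(l)=\tfrac{\sqrt d}{d}$ and $h^i_{\cI_0,l}$ is not $\cC^1$ there by \eqref{enum:function1}.) I would also record here the elementary inequalities $\tfrac{2\tilde y}{\tilde x+1}<\tfrac{\sqrt d}{d}<\tfrac{2\tilde y}{\tilde x-1}$ (their product is $\tfrac1d$), which give $\epsilon_1(l)\geq\tfrac{\sqrt d}{d}$ in all cases, with equality exactly when \eqref{trivialfunction} occurs.

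Granting this, parts (1) and (2) are immediate: for (1), $h^1_{\cI_0,l}\bigl(\tfrac{\sqrt d}{d}\bigr)=0$ while $h^1_{\cI_0,l}$ fails to be $\cC^1$ there, so \autoref{regularity} yields ``GV but not M-regular''; for (2), I claim $\tfrac{2y_0}{x_0-1}\geq\epsilon_1(l)$. This is clear if \eqref{trivialfunction} occurs ($\tfrac{2y_0}{x_0-1}>\tfrac{\sqrt d}{d}=\epsilon_1(l)$), and if the three-piece function occurs with solution $(\tilde x,\tilde y)$ then $\tilde x\geq x_0$, since the positive solutions of Pell's equation are totally ordered and $\bigl(\tfrac{2y}{x-1}\bigr)^2=\tfrac1d\cdot\tfrac{x+1}{x-1}$ is decreasing along them, so $\epsilon_1(l)=\tfrac{2\tilde y}{\tilde x-1}\leq\tfrac{2y_0}{x_0-1}$. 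If $\tfrac{2y_0}{x_0-1}>\epsilon_1(l)$, then $h^1_{\cI_0,l}$ vanishes identically near $\tfrac{2y_0}{x_0-1}$; if $\tfrac{2y_0}{x_0-1}=\epsilon_1(l)$, then \eqref{trivialfunction} does not occur (otherwise $\tfrac{2y_0}{x_0-1}=\tfrac{\sqrt d}{d}$, impossible) and $h^1_{\cI_0,l}$ is $\cC^1$ at $\epsilon_1(l)$. Either way, $\cI_0\langle\tfrac{2y_0}{x_0-1}l\rangle$ is M-regular.

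For the final assertion: by \autoref{regularity} and the first paragraph, $\cI_0\langle\tfrac1m l\rangle$ is M-regular exactly when $\tfrac1m>\epsilon_1(l)$, or $\tfrac1m=\epsilon_1(l)$ with \eqref{trivialfunction} not occurring. If $m^2>d$ then $\tfrac1m<\tfrac{\sqrt d}{d}\leq\epsilon_1(l)$, so $h^1_{\cI_0,l}(\tfrac1m)>0$ and M-regularity fails; if $m^2=d$ then $d$ is a perfect square, so $\tfrac1m=\tfrac{\sqrt d}{d}=\epsilon_1(l)$ and \eqref{trivialfunction} does occur, and M-regularity fails as well. Conversely, suppose $m^2<d$. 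If $d$ is a perfect square then $\epsilon_1(l)=\tfrac{\sqrt d}{d}<\tfrac1m$ and we are done; if $d$ is not a perfect square, it suffices (using $\epsilon_1(l)\leq\tfrac{2y_0}{x_0-1}$ from \autoref{main}.\eqref{main2}) to show $\tfrac1m\geq\tfrac{2y_0}{x_0-1}$, i.e. $x_0\geq 2my_0+1$. But $x_0^2=4dy_0^2+1>4dy_0^2>4m^2y_0^2=(2my_0)^2$, so $x_0>2my_0$, hence $x_0\geq 2my_0+1$ since both are integers. Thus $\tfrac1m\geq\tfrac{2y_0}{x_0-1}\geq\epsilon_1(l)$; and if this is an equality, then $\epsilon_1(l)=\tfrac1m$ is rational, so $\epsilon_1(l)\neq\tfrac{\sqrt d}{d}$ and \eqref{trivialfunction} does not occur. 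In every case $\cI_0\langle\tfrac1m l\rangle$ is M-regular.

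The argument is essentially bookkeeping with the explicit piecewise formulas of \autoref{main}; the only genuinely arithmetic ingredient is the one-line inequality $x_0\geq 2my_0+1$ for $m^2<d$, and the only delicate point is the borderline case $\tfrac1m=\epsilon_1(l)$, which is handled by the differentiability statements already contained in \autoref{main}.
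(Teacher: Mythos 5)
Your proof is correct, and for parts (1) and (2) it follows the same route as the paper: everything is read off from the explicit piecewise formulas of \autoref{main} together with the criterion of \autoref{regularity}. (The paper simply declares these parts to be ``directly deduced'' and leaves the bookkeeping implicit; your spelling out the vanishing discriminant of the middle quadratic, the ordering $\frac{2\tilde y}{\tilde x+1}<\frac{\sqrt d}{d}<\frac{2\tilde y}{\tilde x-1}$, and the monotonicity of $\frac{2y}{x-1}$ along Pell solutions is harmless and arguably clearer.) Where you genuinely diverge is the one arithmetic ingredient of the last assertion, namely $\frac{2y_0}{x_0-1}\leq\frac{1}{m}$ for every integer $m<\sqrt d$. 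The paper reduces to $m=\lfloor\sqrt d\rfloor$, writes $d=m^2+k$, reformulates the inequality as $ky_0\geq m$, and proves it by an iterative bootstrap of lower bounds on $y_0$ (first $y_0\geq2$, then $y_0\geq3$, and so on, each step ruling out a further range of $k$ by squeezing $4dy_0^2+1$ between consecutive squares). Your argument replaces all of this with the observation that $x_0^2=4dy_0^2+1>4m^2y_0^2=(2my_0)^2$, hence $x_0>2my_0$, hence $x_0\geq 2my_0+1$ by integrality --- which is literally the desired inequality. This is a genuine simplification: it proves exactly the same statement (the paper itself records the equivalence of $\frac{2y_0}{x_0-1}\leq\frac{1}{m}$ with $x_0^2\geq(2my_0+1)^2$, i.e.\ with $ky_0\geq m$) in one line, with no case analysis on $k$ and no reduction to $m=\lfloor\sqrt d\rfloor$. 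The only other delicate point, the borderline case $\frac{1}{m}=\epsilon_1(l)$, you handle correctly via the rationality/irrationality dichotomy and the order-two vanishing of $h^1_{\cI_0,l}$ at $\frac{2\tilde y}{\tilde x-1}$.
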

\begin{proof}
Only the last assertion is not directly deduced from \autoref{main} and \autoref{regularity}, as it also requires the following property: if $d$ is not a perfect square and $(x_0,y_0)$ is the minimal positive solution to $x^2-4d\cdot y^2=1$, then $\frac{2y_0}{x_0-1}\leq\frac{1}{m}$ for every integer $m<\sqrt{d}$. For the sake of clarity, we outline a proof of this inequality.

It suffices to check the case $m=\lfloor\sqrt{d}\rfloor$. To this end, we write $d=m^2+k$ for some $k\in\{1,\ldots,2m\}$ (which is possible since $m^2<d<(m+1)^2$). The inequality $\frac{2y_0}{x_0-1}\leq\frac{1}{m}$ is equivalent to $x_0^2\geq (2my_0+1)^2$, and hence to $ky_0\geq m$ (after using $x_0^2=4d\cdot y_0^2+1$ and $d=m^2+k$).

Since $y_0$ is a positive integer, the fulfillment of $ky_0\geq m$ is clear for $k\in\{m,\ldots,2m\}$. Hence we may assume $k\in\{1,\ldots,m-1\}$. For $k$ in this range, one observes that $y_0=1$ cannot happen; indeed, $y_0=1$ would imply that $4d\cdot y_0^2+1=4d+1=(2m)^2+4k+1$ is a perfect square, in contradiction to the inequalities
\[
(2m)^2<(2m)^2+4k+1\leq(2m)^2+4m-3<(2m+1)^2
\]

Therefore, $y_0\geq2$ for all $k\in\{1,\ldots,m-1\}$. In particular, the inequality $ky_0\geq m$ holds for all $k\in\{\lfloor\frac{m+1}{2}\rfloor,\ldots,m-1\}$, so we may assume $k\in\{1,\ldots,\lfloor\frac{m-1}{2}\rfloor\}$. 

For $k$ in this range, $y_0=2$ cannot happen; otherwise, $4d\cdot y_0^2+1=(4m)^2+16k+1$ would be a perfect square, contradicting
\[
(4m)^2<(4m)^2+16k+1\leq (4m)^2+8m-7<(4m+1)^2
\]

It follows that $y_0\geq 3$ for all $k\in\{1,\ldots,\lfloor\frac{m-1}{2}\rfloor\}$, which in particular proves $ky_0\geq m$ for all $k\in\{\lfloor\frac{m+2}{3}\rfloor,\ldots,\lfloor\frac{m-1}{2}\rfloor\}$. Repeating this process (proving $y_0\geq 4$ for all $k\in\{1,\ldots,\lfloor\frac{m-1}{3}\rfloor\}$, and so on) one obtains the desired inequality for all possible values of $k$.
\end{proof}

We point out that \autoref{correg} gives an affirmative answer, in the case of abelian surfaces, to a question posed by Ito (\cite[Remark 6.4]{Ito3}). By means of it, we prove \autoref{syzygies}:

\begin{proof}[Proof of \autoref{syzygies}]
Under the assumptions of \autoref{main}, $\cI_0\langle\frac{1}{2}l\rangle$ is $IT(0)$ for every $d\geq7$, as an immediate application of \autoref{regularity}.\eqref{IT0} and the upper bounds for $\epsilon_1(l)$. Thus the first assertion follows from \autoref{ito}.\eqref{ref:projnormal}.

If $d>(p+2)^2$ for some $p\geq1$, then $\cI_0\langle\frac{1}{p+2}l\rangle$ is M-regular by the last assertion of \autoref{correg}. Hence \autoref{ito}.\eqref{ref:Np} guarantees the property $(N_p)$ for representatives of $l$.
\end{proof}

\section{Lower bounds for \texorpdfstring{$\epsilon_1(l)$}{epsilon1(l)}}\label{sec:Lowerbound}

Let $d$ be a positive integer which is not a perfect square, and let $(x_0,y_0)$ be the minimal positive solution to $x^2-4d\cdot y^2=1$. In the sequel, we will assume that $\Char(\bK)$ divides neither $x_0^2$ nor $x_0^2-1$ (in particular, $\Char(\bK)\neq2$).

This section is devoted to prove \autoref{main}.\eqref{main3}, which in particular gives lower bounds for $\epsilon_1(l)$. Our approach is based on the following result (valid without the hypothesis of \autoref{main}): 

\begin{prop}\label{symcurve}
If $(S,l)$ is a $(1,d)$-polarized abelian surface and $L$ is a symmetric representative of $l$, then $h^0(S,\mu_{x_0}^*\cI_0\otimes L^{2x_0y_0})\geq x_0^2$. In other words, the linear system of curves $|L^{2x_0y_0}|$ has at least $x_0^2$ independent elements that contain all the $x_0$-torsion points of $S$.
\end{prop}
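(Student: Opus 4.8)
The goal is to exhibit at least $x_0^2$ independent sections of $L^{2x_0y_0}$ vanishing at all $x_0$-torsion points of $S$, equivalently to show $h^0(S,\mu_{x_0}^*\cI_0\otimes L^{2x_0y_0})\geq x_0^2$. The plan is to analyze the multiplication-by-$x_0$ pullback via the theta group machinery of \autoref{mumford}. First I would observe that $\mu_{x_0}^*L^{2x_0y_0}$ represents the polarization $x_0^2\cdot 2x_0y_0\cdot l = 2x_0^3y_0\, l$, and more precisely, since $L$ is symmetric, $\mu_{x_0}^*L \cong L^{x_0^2}\otimes(\text{point of }\Pic^0)$, and by symmetry that point can be taken trivial up to a translate; so $\mu_{x_0}^*L^{2x_0y_0}\cong L^{2x_0^3y_0}$ as polarized line bundles (after possibly adjusting by the translation absorbed into the choice of symmetric $L$). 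Then $\mu_{x_0}^*\cI_0\otimes L^{2x_0y_0}$, pushed forward, relates $H^0(S,\mu_{x_0}^*\cI_0\otimes L^{2x_0y_0})$ to the sections of a large power of $L$ that vanish along the fiber $\mu_{x_0}^{-1}(0)=S_{x_0}$, the group of $x_0$-torsion points, which has order $x_0^4$.

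The key mechanism is that the finite Heisenberg group $\cG$ sitting over $K = S_{x_0}\subset K(L^{N})$ (for $N$ the relevant exponent) acts on $H^0(S,L^N)$, and the subspace of sections vanishing on all of $S_{x_0}$ is a $\cG$-submodule. Using the explicit model $V(\delta)$ of \autoref{mumford}\eqref{repthetagroup} — functions on $A(L^N)\cong\bigoplus\bZ/d_i$ with the twisted translation action — vanishing at a translate-orbit of torsion points becomes a system of linear conditions that is equivariant, and one can count the dimension of the common zero locus. Concretely, I would use that for $L$ symmetric the evaluation at $2$-torsion-compatible points, together with the description of how $S_{x_0}$ sits inside $K(L^N)$, forces the space of such sections to be a sum of isotypic Heisenberg pieces whose total dimension is a multiple of the relevant index; the bound $x_0^2$ should emerge as $\sqrt{\,|S_{x_0}|\,}=x_0^2$, i.e. as the dimension of an irreducible representation of the Heisenberg group attached to the symplectic subspace $S_{x_0}$, or as the codimension count $h^0(L^N)-|S_{x_0}|\cdot(\text{something})$ done equivariantly. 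The symmetry hypothesis and the action of $\tilde i$ from \autoref{mumford}(4) are what pin down which isotypic components survive and let one avoid overcounting, which is why $\Char(\bK)\neq 2$ is assumed; the restrictions that $\Char(\bK)$ divide neither $x_0^2$ nor $x_0^2-1$ ensure the relevant isogenies ($\mu_{x_0}$, and the polarization isogeny of $L^N$) are separable, so all these group-theoretic dimension counts are valid.

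In more detail, the steps I would carry out are: (i) identify $N$ and the type $\delta$ of $\mu_{x_0}^*L^{2x_0y_0}$ explicitly, checking $2x_0y_0$ and $x_0^2$ interact so that $N$ is divisible by $x_0$ in the right way — here the Pell relation $x_0^2-4dy_0^2=1$ should be used to make the arithmetic close; (ii) set up the Heisenberg representation on $H^0(S,L^N)$ in the $V(\delta)$-model, and translate "vanishing at all $x_0$-torsion points" into a $\cG(S_{x_0})$-equivariant linear condition on $V(\delta)$; (iii) decompose $V(\delta)$ as a module over the subgroup of the Heisenberg group lying over $S_{x_0}\subset K(L^N)$, so that the evaluation map factors through a multiple of the (unique, up to iso) irreducible $\cG(S_{x_0})$-representation $W$ with $\dim W = x_0^2$; (iv) conclude that the kernel of evaluation, being $\cG(S_{x_0})$-stable and of codimension at most $\dim W\cdot(\#\text{copies}-1)$ inside a sum of copies of $W$, has dimension at least $x_0^2$, using $\dim H^0(S,L^N) = N^2/(\ldots)$ together with $|S_{x_0}|=x_0^4$ and a Riemann–Roch count that the number of copies of $W$ is $\geq$ (number of copies hit by evaluation) $+1$.

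The main obstacle I anticipate is step (iii)–(iv): making the equivariant dimension count airtight. It is easy to believe that sections vanishing on a symplectic torsion subgroup form a Heisenberg-submodule and hence have dimension a multiple of $x_0^2$, but proving the multiple is \emph{at least} $1$ — i.e. that such sections actually exist and are not forced to be zero — requires genuinely producing one, or comparing Euler characteristics carefully. I expect the symmetry of $L$ to be essential precisely here: using $\tilde i$ one can restrict to the $\pm$-eigenspaces and play off the counts of symmetric versus anti-symmetric theta functions, where the relevant inequality becomes a comparison like $h^0(L^N)^{+} > \#\{x_0\text{-torsion points fixed by }\pm\}$, which the Pell equation and the parity of $x_0$ make work out. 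Getting the bookkeeping of $2$-torsion contributions right (which is where $\Char(\bK)\neq 2$ and the explicit $V(\delta)$-basis of \autoref{mumford} are indispensable) will be the delicate part of the argument.
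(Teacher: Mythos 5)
There is a genuine gap here, and you have in fact located it yourself: steps (iii)--(iv) do not close. First, a misreading to flag: the relevant line bundle is $L^{2x_0y_0}$ itself, not $\mu_{x_0}^*L^{2x_0y_0}$. One has $\mu_{x_0}^*\cI_0\otimes L^{2x_0y_0}=\cI_{T}\otimes L^{2x_0y_0}$, where $T\cong\left(\sfrac{\bZ}{x_0}\right)^4$ is the $x_0$-torsion subgroup, so the sections in question are sections of $L^{2x_0y_0}$ vanishing on $T$ (the second sentence of the statement says exactly this); your step (i), which also pulls back the line bundle to $L^{2x_0^3y_0}$, would lead to a statement about the wrong linear system. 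With the correct bundle, $h^0(L^{2x_0y_0})=4dx_0^2y_0^2=x_0^2(x_0^2-1)$ by the Pell relation, the pairing $e^{L^{2x_0y_0}}$ restricted to $T$ is nondegenerate, and $H^0(S,L^{2x_0y_0})\cong W^{\oplus(x_0^2-1)}$ as a module over the theta subgroup over $T$, with $\dim W=x_0^2$. The equivariant evaluation map lands in $\bK^{T}\cong W^{\oplus x_0^2}$, so the source has \emph{fewer} copies of $W$ than the target and the isotypic count forces nothing: the kernel could a priori be zero. Your proposed fix, an inequality of the shape $h^0(L^N)^{+}>\#\{x_0\text{-torsion points fixed by }\pm\}$, is not the right comparison either: since $x_0$ is odd, only the origin is fixed by $-1$, so that inequality is trivially satisfied and yields no section vanishing on $T$.

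What actually closes the gap in the paper is a two-step reduction with an exactly one-dimensional margin. First, split $T=\langle a_1,a_2\rangle\oplus\langle a_3,a_4\rangle$ into two Lagrangian halves, with $a_3,a_4$ acting diagonally on $H^0(S,L^{2x_0y_0})$; the common eigenspaces $E_{(l,m)}$ each have dimension $4dy_0^2=x_0^2-1$, are permuted transitively by the lifts of $a_1,a_2$, and curves in $\bP(E_{(l,m)})$ are automatically invariant under translation by $\langle a_3,a_4\rangle$. This reduces everything to producing \emph{one} curve in $\bP(E_{(0,0)})$ through the $x_0^2$ points of $\langle a_1,a_2\rangle$; its $\langle a_1,a_2\rangle$-translates then give the $x_0^2$ independent curves. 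Second, the symmetry involution $\tilde{\imath}$ preserves $E_{(0,0)}$, and an explicit count of fixed basis vectors shows $\dim E_{(0,0)}^{+1}=\frac{x_0^2-1}{2}+2$; since $x_0$ is odd, the points of $\langle a_1,a_2\rangle$ pair up under $p\mapsto-p$ with only the origin fixed, so they impose at most $\frac{x_0^2-1}{2}+1$ conditions on symmetric sections --- one less than the available dimension. That ``$+2$ versus $+1$'' comparison inside a single eigenspace is the entire content of the proof, and it is precisely the step missing from your argument.
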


\begin{proof}
Since the subgroup $T\cong\left(\sfrac{\bZ}{x_0}\right)^4$ of $x_0$-torsion points is contained in
\[
K(L^{2x_0y_0})\cong\left(\sfrac{\bZ}{2x_0y_0}\oplus\sfrac{\bZ}{2dx_0y_0}\right)\times\widehat{\left(\sfrac{\bZ}{2x_0y_0}\oplus\sfrac{\bZ}{2dx_0y_0}\right)},
\]
we will use the representation of the theta group $\cG(L^{2x_0y_0})$ on $H^0(S,L^{2x_0y_0})$ to understand how translation by points of $T$ acts on the linear system $|L^{2x_0y_0}|$.

We consider the isomorphism of \autoref{mumford}.\eqref{repthetagroup}, which in particular identifies $\cG(L^{2x_0y_0})$ with $\bK^*\times K(L^{2x_0y_0})$ (with a noncommutative group operation), and $H^0(S,L^{2x_0y_0})$ with
\[
V(2x_0y_0,2dx_0y_0)=\{\bK\text{-valued functions on } \sfrac{\bZ}{2x_0y_0}\oplus\sfrac{\bZ}{2dx_0y_0}\}.
\]

Denote by $\{\delta_{j,k}\mid (j,k)\in\sfrac{\bZ}{2x_0y_0}\oplus\sfrac{\bZ}{2dx_0y_0}\}$ the canonical basis of $V(2x_0y_0,2dx_0y_0)$, that is: $\delta_{j,k}(l,m)=1$ if $(j,k)=(l,m)$, and $\delta_{j,k}(l,m)=0$ otherwise.

Moreover, let $\{a_1,a_2,a_3,a_4\}$ be the following basis of $T$ inside $K(L^{2x_0y_0})$:
\begin{itemize}[\textbullet]
    \item $a_1=(2y_0,0)$, $a_2=(0,2dy_0)$ in $\sfrac{\bZ}{2x_0y_0}\oplus\sfrac{\bZ}{2dx_0y_0}$.
    \item $a_3,a_4\in\Hom_\bZ(\sfrac{\bZ}{2x_0y_0}\oplus\sfrac{\bZ}{2dx_0y_0},\bK^*)$ are the homomorphisms given by
    \[
    a_3(1,0)=\xi,\;a_3(0,1)=1,\;a_4(1,0)=1,\;a_4(0,1)=\xi,
    \]
    where $\xi$ is a primitive $x_0$-th root of 1.
\end{itemize}

Consider the lifts $(1,a_i)\in\cG(L^{2x_0y_0})$ of $a_i$ (for $i=1,2,3,4$) to the theta group. According to the representation described in \autoref{mumford}.\eqref{repthetagroup}, they induce the endomorphisms
\[
\widetilde{a}_1:\delta_{j,k}\mapsto\delta_{j-2y_0,k}\;,\;\;\;\;\;\widetilde{a}_2:\delta_{j,k}\mapsto\delta_{j,k-2dy_0}\;,\;\;\;\;\;\widetilde{a}_3:\delta_{j,k}\mapsto\xi^j\delta_{j,k}\;,\;\;\;\;\;\widetilde{a}_4:\delta_{j,k}\mapsto\xi^k\delta_{j,k}
\]
on $H^0(S,L^{2x_0y_0})$. Recall that the projectivization of $\widetilde{a}_i$ on the linear system $|L^{2x_0y_0}|$ corresponds to (the dual of) the projectivity $t_{a_i}:\bP(H^0(S,L)^\vee)\to\bP(H^0(S,L)^\vee)$ extending $t_{a_i}:S\to S$.

Observe that $\widetilde{a}_3,\widetilde{a}_4$ are diagonalizable endomorphisms that commute (as corresponds to $a_3,a_4$ generating a totally isotropic subgroup of $K(L^{2x_0y_0})$). This implies that every eigenspace of $\widetilde{a}_3$ is an invariant subspace for $\widetilde{a}_4$, and conversely.

Therefore, we can find a decomposition
\[
H^0(S,L^{2x_0y_0})=\bigoplus_{\substack{l,m\in\{0,...,x_0-1\}}}  E_{(l,m)},
\]
where $E_{(l,m)}$ is a subspace of eigenvectors for both $\widetilde{a}_3$ and $\widetilde{a}_4$ (of eigenvalue $\xi^l$ for $\widetilde{a}_3$, and eigenvalue $\xi^m$ for $\widetilde{a}_4$). Explicitly, we have
\[
E_{(l,m)}=\langle\delta_{j,k}\mid j\equiv l\text{ and }k\equiv m\text{ (mod $x_0)$}\rangle,
\]
so every subspace $E_{(l,m)}$ has dimension $2y_0\cdot2dy_0=4dy_0^2=x_0^2-1$. 

The projectivization of $E_{(l,m)}$ represents a $(x_0^2-2)$-dimensional linear system $\mathcal{L}_{l,m}\subset|L^{2x_0y_0}|$, formed by curves which remain invariant under translation by points of the subgroup $\langle a_3,a_4\rangle\subset T$. In particular, any curve of $\mathcal{L}_{l,m}$ containing $\langle a_1,a_2\rangle\subset T$ automatically contains all of $T$.

Moreover, since $\gcd(x_0,2dy_0)=1$, it follows from the above description of $\widetilde{a}_1,\widetilde{a}_2$ that the subgroup $\langle(1,a_1),(1,a_2)\rangle\cong\left(\sfrac{\bZ}{x_0}\right)^2\subset\cG(L^{2x_0y_0})$ acts transitively on the set $\{E_{(l,m)}\}$.
Thus for our purposes it suffices to find a curve $C\in\mathcal{L}_{0,0}$ containing the $x_0^2$ points of $\langle a_1,a_2\rangle\subset T$. Indeed, the set of $x_0^2$ curves will be formed by one curve in each $\mathcal{L}_{l,m}$, obtained from $C$ by translation with the corresponding point of $\langle a_1,a_2\rangle$.

Since $L^{2x_0y_0}$ is totally symmetric, we may consider the involution of $H^0(S,L^{2x_0y_0})$
\[
\widetilde{i}:\delta_{j,k}\mapsto\delta_{-j,-k},
\]
whose projectivization extends the inversion $i:S\to S$ to a projectivity of $\bP(H^0(S,L^{2x_0y_0})^\vee)$.

The subspace $E_{(0,0)}$ is clearly invariant by this endomorphism, and the restriction $\widetilde{i}_{|E_{(0,0)}}$ satisfies:

\begin{itemize}[\textbullet]
    \item The subspace $E_{(0,0)}^1\subset E_{(0,0)}$ of eigenvectors of eigenvalue $1$ has dimension $2dy_0^2+2=\frac{x_0^2-1}{2}+2$. Explicitly, a basis of $E_{(0,0)}^1$ is given by
    \[
    \delta_{sx_0,tx_0}+\delta_{(2y_0-s)x_0,(2dy_0-t)x_0}
    \]
    for $s\in\{0,...,y_0\}$, and $t\in\{0,...,2dy_0-1\}$ (if $s\neq0,y_0$) or $t\in\{0,...,dy_0\}$ (if $s=0,y_0$).
    
    \vspace{1.5mm}
    
    \item The eigenspace $E_{(0,0)}^{-1}\subset E_{(0,0)}$ of eigenvalue $-1$ has dimension $2dy_0^2-2$, with basis
    \[
    \delta_{sx_0,tx_0}-\delta_{(2y_0-s)x_0,(2dy_0-t)x_0}
    \]
    for $s\in\{0,...,y_0\}$, and $t\in\{0,...,2dy_0-1\}$ (if $s\neq0,y_0$) or $t\in\{1,...,dy_0-1\}$ (if $s=0,y_0$).
\end{itemize}

The projectivization of $E_{(0,0)}^1$ defines a $(\frac{x_0^2-1}{2}+1)$-dimensional linear system $\mathcal{L}_{0,0}^1\subset\mathcal{L}_{0,0}$, formed by symmetric curves that remain invariant under translation by points of $\langle a_3,a_4\rangle\subset T$. 

Since $x_0$ is odd, the only 2-torsion point of $\langle a_1,a_2\rangle\cong\left(\sfrac{\bZ}{x_0}\right)^2$ is the origin of $S$; accordingly, points of $\langle a_1,a_2\rangle$ impose at most $\frac{x_0^2-1}{2}+1$ independent conditions on $\mathcal{L}_{0,0}^1$. It is thus possible to find a curve of $\mathcal{L}_{0,0}^1$ containing all the points of $\langle a_1,a_2\rangle\subset T$, which finishes the proof.
\end{proof}

\vspace{1.5mm}

\begin{proof}[Proof of \autoref{main}.\eqref{main3}]
\autoref{symcurve} shows (via Serre duality and cohomology and base change) that the sheaf $R^{2}\Phi_{\cP^{\vee}}((\mu_{x_0}^*\mathcal{I}_0\otimes L^{2x_0y_0})^{\vee})$ is nonzero. In virtue of the explicit expression for $h^0_{\cI_0,l}$ given in \eqref{explicitfunction}, this implies that $h^0_{\cI_0,l}(x)$ is positive for $x>\frac{2y_0}{x_0}$.

On the other hand, since $x_1=x_0^2+4dy_0^2$ and $y_1=2x_0y_0$, the equality $\frac{2y_0}{x_0}=\frac{2y_1}{x_1+1}$ holds.

Therefore, by \autoref{main} we conclude that only two expressions for $h^0_{\cI_0,l}$ are possible (those corresponding to the solutions $(x_0,y_0)$ and $(x_1,y_1)$). In particular, $\epsilon_1(l)\in\{\frac{2y_0}{x_0-1},\frac{2y_1}{x_1-1}\}$.
\end{proof}

\newpage

\begin{rem}\label{finalrem}\hfill
\begin{enumerate}[\rm (1)]
    \item\label{rationality} It follows, at least when $\Char(\bK)=0$, that $\epsilon_1(l)$ is rational under the assumptions of \autoref{main}. It would be interesting to know whether this holds true for every polarized abelian surface (or more generally, for every polarized abelian variety).

    \item\label{minimalsol} There are several examples of non-perfect squares $d$ where $\epsilon_1(l)$ is known for a general $(1,d)$-polarized (complex) abelian surface $(S,l)$ (see \cite[Example 5.11]{Ito2}); for all of them, there is an equality $\epsilon_1(l)=\frac{2y_0}{x_0-1}$.
Thus it seems reasonable to expect this for every non-perfect square $d$.
    
\noindent Assume the equality $\epsilon_1(l)=\frac{2y_1}{x_1-1}$ holds. According to the expression for $h^0_{\cI_0,l}$ given by \autoref{main}, for every $x>\frac{2y_1}{x_1+1}$ small enough we have
    \[
    h^0_{\cI_0,l}(x)=\frac{d(x_1+1)}{2}x^2-2dy_1\cdot x+\frac{x_1-1}{2}=dx_0^2\left(x-\frac{2y_0}{x_0}\right)^2,
    \]
    and then an elementary manipulation of \eqref{explicitfunction} shows that $R^{2}\Phi_{\cP^{\vee}}((\mu_{x_0}^*\mathcal{I}_0\otimes L^{2x_0y_0})^{\vee})$ is a 0-dimensional sheaf of length $x_0^2$.
    
    \noindent But note that \autoref{symcurve} precisely shows that, if  $R^{2}\Phi_{\cP^{\vee}}((\mu_{x_0}^*\mathcal{I}_0\otimes L^{2x_0y_0})^{\vee})$ is 0-dimensional, then it has length $\geq x_0^2$. Hence a slightly stronger version of \autoref{symcurve} (with $x_0^2+1$ independent curves on $|L^{2x_0y_0}|$, or with a curve in a translated linear system $|L^{2x_0y_0}\otimes\alpha|$ containing also $T$) would yield a contradiction, leading to a proof of $\epsilon_1(l)=\frac{2y_0}{x_0-1}$.
\end{enumerate}

\end{rem}

\vspace{1.5mm}

\bibliography{refer}

\begin{thebibliography}{Mum66}

\bibitem[BPS20]{BPS:LinearSystems}
Miguel~\'{A}ngel Barja, Rita Pardini, and Lidia Stoppino.
\newblock Linear systems on irregular varieties.
\newblock {\em J. Inst. Math. Jussieu}, 19(6):2087--2125, 2020.
\newblock arXiv:1606.03290.

\bibitem[Bri08]{Bridgeland:K3}
Tom Bridgeland.
\newblock Stability conditions on {$K3$} surfaces.
\newblock {\em Duke Math. J.}, 141(2):241--291, 2008.

\bibitem[Cau20]{Caucci}
Federico Caucci.
\newblock The basepoint-freeness threshold and syzygies of abelian varieties.
\newblock {\em Algebra Number Theory}, 14(4):947--960, 2020.

\bibitem[GP98]{gp}
Mark Gross and Sorin Popescu.
\newblock Equations of {$(1,d)$}-polarized abelian surfaces.
\newblock {\em Math. Ann.}, 310(2):333--377, 1998.

\bibitem[Ito20a]{Ito1}
Atsushi Ito.
\newblock Basepoint-freeness thresholds and higher syzygies on abelian
  threefolds, 2020.
\newblock arXiv:2008.10272.

\bibitem[Ito20b]{Ito2}
Atsushi Ito.
\newblock Higher syzygies on general polarized abelian varieties of type
  $(1,...,1,d)$, 2020.
\newblock To appear in \textit{{M}ath. {N}achr.}
\newblock arXiv:2011.09687.

\bibitem[Ito21]{Ito3}
Atsushi Ito.
\newblock M-regularity of $\mathbb{Q}$-twisted sheaves and its application to
  linear systems on abelian varieties, 2021.
\newblock arXiv:2102.12175.

\bibitem[Iye99]{iyer}
Jaya~N. Iyer.
\newblock Projective normality of abelian surfaces given by primitive line
  bundles.
\newblock {\em Manuscripta Math.}, 98(2):139--153, 1999.

\bibitem[Jia20]{Jiang:Syzygies}
Zhi Jiang.
\newblock Cohomological rank functions and syzygies of abelian varieties, 2020.
\newblock To appear in \textit{{M}ath. {Z}.}
\newblock arXiv:2010.10053.

\bibitem[JP20]{JP}
Zhi Jiang and Giuseppe Pareschi.
\newblock Cohomological rank functions on abelian varieties.
\newblock {\em Ann. Sci. \'{E}c. Norm. Sup\'{e}r. (4)}, 53(4):815--846, 2020.

\bibitem[Laz90]{laz2}
Robert Lazarsfeld.
\newblock Projectivité normale des surfaces abéliennes. {R}edige par {O}.
  debarre. {P}republication no. 14.
\newblock {\em Europroj-C.I.M.P.A.}, Nice, 1990.

\bibitem[Laz04]{laz}
Robert Lazarsfeld.
\newblock {\em Positivity in algebraic geometry. {I}}, volume~48 of {\em
  Ergebnisse der Mathematik und ihrer Grenzgebiete. 3. Folge. A Series of
  Modern Surveys in Mathematics [Results in Mathematics and Related Areas. 3rd
  Series. A Series of Modern Surveys in Mathematics]}.
\newblock Springer-Verlag, Berlin, 2004.
\newblock Classical setting: line bundles and linear series.

\bibitem[LR21]{LR}
Martí Lahoz and Andrés Rojas.
\newblock Chern degree functions, 2021.
\newblock arXiv:2105.03263.

\bibitem[MS17]{MS}
Emanuele Macrì and Benjamin Schmidt.
\newblock Lectures on {B}ridgeland stability.
\newblock In {\em Moduli of curves}, volume~21 of {\em Lect. Notes Unione Mat.
  Ital.}, pages 139--211. Springer, Cham, 2017.

\bibitem[Mum66]{mumford}
D.~Mumford.
\newblock On the equations defining abelian varieties. {I}.
\newblock {\em Invent. Math.}, 1:287--354, 1966.

\bibitem[Par00]{Pareschi}
Giuseppe Pareschi.
\newblock Syzygies of abelian varieties.
\newblock {\em J. Amer. Math. Soc.}, 13(3):651--664, 2000.

\end{thebibliography}
\bibliographystyle{alphaspecial}

\end{document}